\documentclass[a4paper,english,sumlimits,reqno]{amsart}

\usepackage[latin1]{inputenc}
\usepackage[T1]{fontenc}
\usepackage{babel}

\usepackage{graphicx}

\usepackage[section]{placeins}

\usepackage{enumerate}

\usepackage{varioref}

\usepackage{amssymb}
\usepackage{amsmath}
\usepackage{amsthm}
\usepackage{mathtools}

\usepackage[mathcal]{eucal}
\usepackage{mathrsfs}

\theoremstyle{plain}

\newtheorem{thm}{Theorem}[section]
\newtheorem*{thm*}{Theorem}

\newtheorem{pro}[thm]{Proposition}

\newtheorem{lem}[thm]{Lemma}
\theoremstyle{definition}

\theoremstyle{remark}

\newtheorem*{rem}{Remark}

\numberwithin{equation}{section}

\newcommand{\bb}[1]{\ensuremath{\mathbb #1}}
\newcommand{\cal}[1]{\ensuremath{\mathcal #1}}
\newcommand{\scr}[1]{\ensuremath{\mathscr #1}}

\newcommand{\embed}{\hookrightarrow}
\newcommand{\union}{\cup}
\newcommand{\Union}{\bigcup}
\newcommand{\isect}{\cap}
\newcommand{\Isect}{\bigcap}

\newcommand{\umd}{\ensuremath{\mathrm{UMD}}}
\newcommand{\umdconst}{\ensuremath{\mathscr{U}}}

\newcommand{\charfun}{\ensuremath{\mathtt 1}}
\newcommand{\dcubes}{\scr Q}
\newcommand{\dint}{\scr D}
\newcommand{\lip}{\ensuremath{\mathrm{Lip}}}

\newcommand{\gint}[3][\ ]{\ensuremath{\int_{#1} #3 \, d#2}}


\DeclareMathOperator{\card}{\#}

\DeclareMathOperator{\supp}{supp}
\DeclareMathOperator{\dist}{dist}
\DeclareMathOperator{\diam}{diam}
\DeclareMathOperator{\sidelength}{sl}

\DeclareMathOperator{\lin}{lin}

\DeclareMathOperator{\cond}{\bb E}

\DeclareMathOperator{\salg}{\sigma}
\DeclareMathOperator{\pred}{\pi}

\title[Interpolatory Estimate for $\umd$--Valued Directional Haar Projection]{An Interpolatory Estimate for the $\umd$--Valued Directional Haar Projection}

\author{Richard Lechner}

\thanks{Supported by FWF P20166-N18}
\thanks{\noindent This is part of my PhD thesis written at Department of
  Analysis, J. Kepler University Linz. I want to thank my advisor
  P. F. X. M\"uller for many helpful discussions during the
  preparation of this thesis.}

\address{Department of Analysis
  \newline
  \indent Johannes Kepler Universitaet Linz
  \newline
  \indent Altenbergerstrasse 69
  \newline
  \indent A-4040 Linz, Austria
}

\date{\today}

\begin{document}

\maketitle

\tableofcontents

\newpage
\section{Main Results}\label{s:results}

\subsection{A Brief History of Developement}\label{ss:history_dev}\hfill

The Calculus of Variations, in particular the theory of compensated
compactness has long been a source of hard problems in harmonic
analysis.
One developement started with the work of F. Murat and L. Tartar and
especially in the papers of Murat
(\cite{tartar:1978,tartar:1979,tartar:1983,tartar:1984,tartar:1990,tartar:1993},
and~\cite{murat:1978,murat:1979,murat:1981}).
The decicive theorems were on Fourier multipliers of H\"ormander
type. For extensions of the use of Fourier multipliers in relation to
sequential weak lower semicontinuity of integrals of the form
\begin{equation*}
  (u,v) \mapsto \int f(x,u(x),v(x))\, \mathrm d x,
\end{equation*}
and Young Measures and a full developement of the method
see~\cite{fonseca_mueller:1999}.
The extensions are due to S. Mueller
(see \cite{s_mueller:1999}), who used time--frequency localization and
modern Calderon--Zygmund theory to strengthen the results obtained by
Fourier multiplier methods.

Let $u \in L^p(\bb R^n)$, with $n \geq 2$ and $1 < p < \infty$ fixed,
then the directional Haar projection
$ P^{(\varepsilon)}\, :\, L^p(\bb R^n) \longrightarrow L^p(\bb R^n)$,
is given by
\begin{equation*}
  P^{(\varepsilon)} u
  = \sum_{Q \in \dcubes}
    \langle u, h_Q^{(\varepsilon)} \rangle\,
    h_Q^{(\varepsilon)}\, |Q|^{-1}.
\end{equation*}
For a precise definition see~\eqref{eqn:directional_projection}.
In~\cite{s_mueller:1999} S. Mueller obtained the result
\begin{equation}\label{eqn:intro:result_mue99}
  \|P^{(\varepsilon)} u \|_{L^2(\bb R^2)}
  \leq C\, \|u\|_{L^2(\bb R^2)}^{1/2}\,
    \|R_{i_0} u\|_{L^2(\bb R^2)}^{1-1/2},
\end{equation}
where $R_{i_0}$ denotes the $i_0$--th Riesz transform in $\bb R^2$,
$0 \neq (\varepsilon_1,\varepsilon_2) = \varepsilon \in \{0,1\}^2$,
and $\varepsilon_{i_0} = 1$.
The formal definition of the Riesz transform is supplied in
section~\ref{s:preliminaries}.

This inequality was then extended by J. Lee, P. F. X. Mueller and
S. Mueller in~\cite{lee_mueller_mueller:2007} to arbitrary
$1 < p < \infty$ and dimensions $n \geq 2$ to
\begin{equation}\label{eqn:intro:result_lmm07}
  \|P^{(\varepsilon)} u \|_{L^p(\bb R^n)}
  \leq C\, \|u\|_{L^p(\bb R^n)}^{1/\min(2,p)}\,
    \|R_{i_0} u\|_{L^p(\bb R^n)}^{1-1/\min(2,p)},
\end{equation}
where $\varepsilon \in \{0,1\}^n \setminus \{0\}$,
$\varepsilon_{i_0} = 1$.
Note that the behaviour of this inequality for $1 < p < 2$ and
$2 < p < \infty$ strongly varies.
The most important application of~\eqref{eqn:intro:result_lmm07}
appears for $p = n$.
One can rewrite~\eqref{eqn:intro:result_lmm07} using the notion of
type $\cal T(L^p(\bb R^n)) = \min(2,p)$
\begin{equation*}
  \|P^{(\varepsilon)} u \|_{L^p(\bb R^n)}
  \leq C\, \|u\|_{L^p(\bb R^n)}^{1/\cal T(L^p(\bb R^n))}\,
  \|R_{i_0} u\||_{L^p(\bb R^n)}^{1-1/\cal T(L^p(\bb R^n))}.
\end{equation*}
The proofs of~\eqref{eqn:intro:result_mue99} as well
as~\eqref{eqn:intro:result_lmm07} are based on two consecutive and ad
hoc defined time--frequency localizations of the operator
$P^{(\varepsilon)}$, based on Littlewood--Paley and wavelet
expansions.

\subsection{The Main Result}\label{ss:main_result}\hfill

S. Mueller asks in~\cite{s_mueller:1999} whether it is possible to
obtain~\eqref{eqn:intro:result_mue99} in such a way that the original
time--frequency decompositions are replaced by the \textbf{canonical
  martingale decomposition} of T. Figiel (see~\cite{figiel:1988}
and~\cite{figiel:1991}).
This paper provides an affirmative answer to this question, and
thus extending the interpolatory
estimate~\eqref{eqn:intro:result_lmm07} to the Bochner--Lebesgue space
$L_X^p(\bb R^n)$, provided $X$ satisfies the $\umd$--property.

Our methods are based on martingale methods, explaining the behaviour
of the exponents in the following main
inequality~\ref{eqn:main_result} in terms of type and cotype.
The main result of this paper reads as follows.
\begin{thm*}[Main Result]\label{thm:main_result}
  Let $1 < p < \infty$, $1 \leq i_0 \leq n$ and
  $\varepsilon = (\varepsilon_1,\ldots,\varepsilon_n) \in \{0,1\}^n$
  such that $\varepsilon_{i_0} = 1$.
  If $X$ has the $\umd$--property and $L_X^p$ has non--trivial type
  $\cal T(L_X^p)$, then there exists a constant $C$, such that for all
  $u \in L_X^p$
  \begin{equation}\label{eqn:main_result}
    \|P^{(\varepsilon)} u\|_{L_X^p}
    \leq C\ \|u\|_{L_X^p}^{1/\cal T(L_X^p)}\,
      \|R_{i_0} u\|_{L_X^p}^{1 - 1/\cal T(L_X^p)},
  \end{equation}
  whereas the constant $C$ depends only on $n$, $p$, $X$ and
  $\cal T(L_X^p)$.
\end{thm*}

\begin{proof}
  First define $M \in \bb N$ by
  \begin{equation}
    2^{M-1}
    \leq \frac{\|R_{i_0}\|_{L_X^p,L_X^p}\, \|u\|_{L_X^p}}
      {\|R_{i_0}u\|_{L_X^p}}
    \leq 2^M.
    \label{eqn:main_thm--splitting_index}
  \end{equation}
  After using
  decomposition~\eqref{eqn:directional_projection--decomposition}, the
  triangle inequality, 
  estimates~\eqref{eqn:mollified_operator_riesz_estimate--l<=0},
  \eqref{eqn:mollified_operator_estimate--l>=0-1},
  \eqref{eqn:mollified_operator_riesz_estimate--l>=0} and plugging in
  $M$, we obtain
  \begin{align*}
    \|P^{(\varepsilon)} u\|_{L_X^p}
    & \leq \| P_-^{(\varepsilon)} R_{i_0}^{-1} R_{i_0} u\|_{L_X^p}
      + \sum_{l = 0}^M \|P_l R_{i_0}^{-1} R_{i_0} u\|_{L_X^p}
      + \sum_{l = M}^\infty \|P_l u\|_{L_X^p}\\
    & \lesssim \|R_{i_0} u\|_{L_X^p}
      + \sum_{l = 0}^M 2^{l/\cal T(L_X^p)}\, \|R_{i_0} u\|_{L_X^p}
      + \sum_{l = M}^\infty
        2^{-l (1-\frac{1}{\cal T(L_X^p)})}\, \|u\|_{L_X^p}\\
    & \lesssim 2^{M/\cal T(L_X^p)}\, \|R_{i_0} u\|_{L_X^p}
      + 2^{-M (1-\frac{1}{\cal T(L_X^p)})}\,
      \|u\|_{L_X^p}\\
    & \leq C\, \|u\|_{L_X^p}^{1/\cal T(L_X^p)}
      \|R_{i_0} u\|_{L_X^p}^{1 - 1/\cal T(L_X^p)}.
  \end{align*}
\end{proof}

The basic tools for the proof of the above theorem are vector--valued
estimates of so called ring domain operators, developed
in section~\ref{s:ring_domain_operator}.
A careful examination of T. Figiel's shift operators acting on ring
domains will be crucial in those estimates.

\bigskip

Clearly, the main result, theorem~\ref{thm:main_result}, represents a
result on interpolation of operators, linking the identity map, the
Riesz transforms and the directional Haar projection. We would now
like to give a reformulation of our main theorem which places it in
the context of structure theorems for the so called $K$--method of
interpolation spaces.
To this end, we first introduce the $K$--functional, cite the relevant
structure theorem and apply it to the inequalities stated as our main
result. 

Define the $K$--functional
\begin{equation*}
  K(f,t)
  = \inf \big\{\|g\|_{E_0} + t\, \|h\|_{E_1}\, :\,
    f = g + h,\ g \in E_0, h \in E_1
  \big\},
\end{equation*}
for all $f \in E_0 + E_1$ and $t > 0$,
and the interpolation space
\begin{equation*}
  (E_0,E_1)_{\theta,1}
  = \big\{ f\, :\,
    f \in E_0 + E_1,\ 
    \|f\|_{\theta,1} < \infty
  \big\},
\end{equation*}
where
\begin{equation*}
  \|f\|_{\theta,1} = \int_0^\infty t^{-\theta}\, K(f,t)\,
    \frac{\mathrm d t}{t}.
\end{equation*}

The following proposition interprets interpolatory estimates such as
the ones obtained in our main theorem in terms of continuity of
the identity map between interpolation spaces.
The following proposition is a result of general interpolation theory
(see~\cite[Proposition 2.10, Chapter 5]{bennett_sharpley:1988}).
\begin{pro}\label{pro:interpolation_general_result}
  Let $(E_0,E_1)$ be a compatible couple and suppose $0 < \theta < 1$.
  Then the estimate
  \begin{equation}
    \|f\|_E \leq C\, \|f\|_{\theta,1} 
  \end{equation}
  holds for some constant $C$ and all $f$ in $(E_0,E_1)_{\theta,1}$ if
  and only if
  \begin{equation*}
    \|f\|_E \leq C\, \|f\|_{E_0}^{1-\theta}\, \|f\|_{E_1}^\theta
  \end{equation*}
  holds for some constant $C$ and for all $f$ in $E_0 \isect E_1$.  
\end{pro}

Now we specify how to choose the spaces $E$, $E_0$ and $E_1$ so that
the two equivalent conditions of
the above proposition match precisely the assertions of our main
theorem, see inequality~\eqref{eqn:main_result}.

Fix $0 \neq \varepsilon \in \bb \{0,1\}^n$, let
$R$ denote one of the Riesz transform operators
\begin{equation*}
  R_i\, :\, L_X^p \rightarrow L_X^p
\end{equation*}
defined in section~\ref{s:preliminaries}, where $\varepsilon_i = 1$,
and abbreviate $P^{(\varepsilon)}$ by $P$.
If we define the Banach spaces
\begin{align*}
  E & = L_X^p / \ker(P),
  & \|u + \ker(P)\|_E & = \|P u\|_{L_X^p},\\
  E_0 & = L_X^p,
  & \|u\|_{E_0} & = \|u\|_{L_X^p},\\
  E_1 & = L_X^p / \ker(R),
  & \|u + \ker(R)\|_{E_1} & = \|R u\|_{L_X^p},
\end{align*}
then in view of proposition~\ref{pro:interpolation_general_result}
\begin{equation*}
  (E_0,E_1)_{\theta,1} \embed E,
\end{equation*}
is equivalent to the existence of a constant $C > 0$ such that
\begin{equation*}
  \|u\|_E \leq C\, \|u\|_{\theta,1},
\end{equation*}
for all $u \in (E_0,E_1)_{\theta,1}$.

\bigskip

We are grateful to S. Geiss who pointed out the connection to general
interpolation theory.

\newpage
\section{Preliminaries}\label{s:preliminaries}

This brief section will provide notions and tools most frequently used
in what follows.

At first we will introduce the Haar system supported on dyadic cubes,
the notions of Banach spaces with the $\umd$--property and type and
cotype of Banach spaces.
The $\umd$--property enables us to introduce Rademacher means in our
norm estimates, so that we may use the subsequent inequalities, that
is Kahane's inequality, Kahane's contraction principle and Bourgain's
version of Stein's martingale inequality.

Then we turn to Figiel's shift operators $T_m$ acting on all of the
Haar system, where $T_m$ is bounded by a constant multiple of
\begin{equation*}
  \log (2 + |m|), \qquad m \in \bb Z^n.
\end{equation*}
Very roughly speaking this result due to T. Figiel is obtained by
partitioning all of the dyadic cubes into $\log (2 + |m|)$
collections and bounding $T_m$ by a constant on each of the
collections.
We will have to consider $T_m$ acting only in one direction (assume
$m_2 = \ldots = m_n = 0$) on the Haar spectrum of certain ring domain
operators $S_\lambda$, $\lambda \geq 0$, and it turns out that $T_m$
restricted to this spectrum is uniformly bounded by a constant, as
long as $0 \leq m_1 \leq 2^\lambda -1$.

\subsubsection*{The Haar System}\hfill

First we consider the collection of dyadic intervals at scale
$j \in \bb Z$
\begin{equation*}
  \dint_j
  = \big\{\ [2^{-j} k, 2^{-j}(k+1) [\, :\, k \in \bb Z \ \big\},
\end{equation*}
and the collection of all dyadic intervals
\begin{equation*}
  \dint = \Union_{j \in \bb Z} \dint_j.
\end{equation*}
Now define the $L^\infty$--normalized Haar system
\begin{align*}
  h_{[0,1[}(t)
  & = \charfun_{[0,\frac{1}{2}[}(t) - \charfun_{[\frac{1}{2},1[}(t),
  & t & \in \bb R
  \intertext{and for any $I \in \dint$}
  h_I(t) & = h_{[0,1[}\big( \frac{t - \inf I}{|I|} \big),
  & t & \in \bb R.
\end{align*}

\smallskip

In arbitrary dimensions $n \geq 2$ one can obtain a basis for
$L^p(\bb R^n)$ as follows.
For any
$\varepsilon = (\varepsilon_1,\ldots,\varepsilon_n) \in \{0,1\}^n$,
$\varepsilon \neq 0$ define
\begin{equation*}
  h^{(\varepsilon)}_Q(t) = \prod_{i=1}^n h_{I_i}^{\varepsilon_i}(t_i),
\end{equation*}
where $t = (t_1,\ldots,t_n) \in \bb R^n$,
$Q = I_1 \times \cdots \times I_n$,
$|I_1| = \ldots = |I_n|$, $I_i \in \dint$, and by
$h_{I_i}^{\varepsilon_i}$ we mean
\begin{equation*}
  h_{I_i}^{\varepsilon_i}
  =
  \begin{cases}
    h_{I_i} & \varepsilon_i=1\\
    \charfun_{I_i} & \varepsilon_i=0
  \end{cases}
\end{equation*}
Note that the former basis is supported on rectangles $R$, but the
latter basis is supported on dyadic cubes $Q$.

\subsubsection*{Banach Spaces with the $ \umd$--Property}\hfill

By $L^p(\Omega, \mu; X)$ we denote the space of functions with values
in $X$, Bochner--integrable with respect to $\mu$.
If $\Omega = \bb R^n$ and $\mu$ is the Lebesgue measure $|\cdot|$ on $\bb R^n$,
then set $L_X^p(\bb R^n) = L^p(\bb R^n, |\cdot|; X)$, if unambiguous
abbreviated as $L_X^p$.

\smallskip

We say $X$ is a $\umd$ space if for any $X$--valued martingale
difference sequence $\{d_j\}_j \subset L^p(\Omega,\mu;X)$ and any
choice of signs $\varepsilon_j \in \{-1,1\}$ one has
\begin{equation}\label{eqn:umd--property}
  \big\| \sum_j \varepsilon_j\, d_j \big\|_{L^p(\Omega,\mu;X)}
  \leq \umdconst_p(X)\, \big\| \sum_j d_j \big\|_{L^p(\Omega,\mu;X)}.
\end{equation}

A Banach space $X$ is said to be of type $\cal T$,
$1 < \cal T \leq 2$, respectively of cotype $\cal C$,
$2 \leq \cal C < \infty$
if there are constansts $A(\cal T,X) > 0$ and $B(\cal C,X) > 0$, such
that for every finite set of vectors $\{x_j\}_j \subset X$ we have
\begin{align}
  \int_0^1 \big\| \sum_j r_j(t)\, x_j \big\|_X\, \mathrm d t
  & \leq A(\cal T,X)\,
    \big( \sum_j \| x_j \|_X^{\cal T} \big)^{1/\cal T},
    \label{eqn:type}
  \intertext{respectively}
  \int_0^1 \big\| \sum_j r_j(t)\, x_j \big\|_X\, \mathrm d t
  & \geq B(\cal C,X)\,
    \big( \sum_j \| x_j \|_X^{\cal C} \big)^{1/\cal C},
    \label{eqn:cotype}
\end{align}
where $\{r_j\}_j$ is an independent sequence of Rademacher functions.

It is well known that if $X$ is a $\umd$--space, then for every
$1 < p < \infty$ the Lebesgue--Bochner space $L_X^p$ has
(non--trivial) type and cotype.
Since inequality~\eqref{eqn:type} holds for $\cal T = 1$, respectively
inequality~\eqref{eqn:cotype} with $\cal C = \infty$, even if $X$ does
not have the $\umd$--property, one often refers to $\cal T = 1$ as
trivial type, respectively to $\cal C = \infty$ as trivial cotype.

\subsubsection*{Kahane's Inequality}\hfill

Given $1 \leq p < \infty$, there exists a constant $K_p$ such that for
any Banach space $X$ and any finite sequence $\{x_j\} \subset X$ holds
that
\begin{equation}\label{eqn:kahanes_inequality}
  \bigg(
    \int_0^1 \Big\| \sum_j r_j(t)\, x_j \Big\|_X^p\, \mathrm d t
  \bigg)^{1/p}
  \leq K_p\,
  \int_0^1 \Big\| \sum_j r_j(t)\, x_j \Big\|_X\, \mathrm d t,
\end{equation}
where $\{r_j\}_j$ denotes an independent sequence of Rademacher
functions.

\subsubsection*{Kahane's Contraction Principle}\hfill

For any Banach space $X$, $1 < p < \infty$, finite set
$\{x_j\} \subset X$ and bounded sequence of scalars $\{c_j\}$ holds
true
\begin{equation}\label{eqn:kahanes_contraction_principle}
  \int_0^1 \Big\| \sum_j r_j(t)\, c_j\, x_j \Big\|_X^p\, \mathrm d t
  \leq \sup_j |c_j|^p\,
  \int_0^1 \Big\| \sum_j r_j(t)\, x_j \Big\|_X^p\, \mathrm d t,
\end{equation}
where $\{r_j\}_j$ denotes an independent sequence of Rademacher
functions.

\subsubsection*{The Martingale Inequality of Stein -- Bourgain's Version}\hfill

The vector--valued version of Stein's martingale inequality states
that if $(\Omega,\cal F,\mu)$ is a probability space,
$\cal F_1 \subset \ldots \subset \cal F_m \subset \cal F$ is an
increasing sequence of $\sigma$--algebras,
$f_1,\ldots,f_m \in L^p(\Omega,\mu;X)$ and $r_1,\ldots,r_m$ are
independent Rademacher functions, then
\begin{equation}\label{eqn:steins_martinagle_inequality}
  \int_0^1 \big\|
    \sum_{i=1}^m r_i(t)\, \cond(f_i | \cal F_i)
  \big\|_{L^p(\Omega,\mu;X)}\, \mathrm dt
  \leq C\,   \int_0^1 \big\|
    \sum_{i=1}^m r_i(t)\, f_i
  \big\|_{L^p(\Omega,\mu;X)}\, \mathrm dt,
\end{equation}
where $C$ depends on $p$ and $X$.
The Banach space $X$ having the $\umd$--property assures
$C < \infty$.

\bigskip

\subsubsection*{Figiel's Shift Operators}\hfill

The proof of the main result~\eqref{eqn:main_result} makes use
of Figiel's shift operators \cite{figiel:1988}.
For any $m \in \bb Z^n$, and collection $\scr B$ of cubes
$Q \subset \bb R^n$ let $\tau_m\, :\, \scr B \rightarrow \dcubes$
\begin{equation}\label{eqn:shift_function}
  \tau_m(Q) = Q + m\, \sidelength(Q),
\end{equation}
where $\sidelength(Q)$ is the sidelength of $Q$.
Precisely, if $Q = I_1\times \ldots \times I_n$, with
$|I_1| = \ldots = |I_n|$, then
$\sidelength(Q) = |I_1| = \ldots = |I_n|$.

The map $\tau_m$ induces the rearrangement operator $T_m$, as the
linear extension of
\begin{equation}\label{eqn:shift_operator_1}
  T_m h_Q = h_{\tau_m(Q)}, \qquad Q \in \scr B.
\end{equation}

Let $X$ be a $\umd$ space, then the theorem of T. Figiel bounds the
shift operator $T_m$ acting on $L_X^p$ by
\begin{equation}\label{eqn:shift_operator_1_estimate}
  \| T_m\, :\, L_X^p \rightarrow L_X^p \|
  \leq C\, \log(2 + |m|),
\end{equation}
where $C = C(n,p,\umdconst_p(X))$.

\subsubsection*{The Riesz Transform}\hfill

Formally, we define the Riesz transform $R_{i_0}$ by
\begin{align}\label{eqn:riesz_transform}
  R_{i_0} f & = K_{i_0} * f,\\
  K_{i_0}(x) & = c_n\, \frac{x_{i_0}}{|x|^{n+1}},
  & x & = (x_1, \ldots,x_n).
\end{align}
Details may be found in~\cite{stein:1970} and~\cite{stein:1993}.

\subsubsection*{Supplementary Definitions}\hfill

\begin{equation*}
  \cal E
  = \cal E(n)
  = \big\{
    \varepsilon \in \{0,1\}^n\, :\,  \varepsilon \neq (0,\ldots,0)
  \big\},
\end{equation*}
and 
\begin{equation*}
  \cal E_{i_0}
  = \cal E_{i_0}(n)
  = \big\{
    \varepsilon \in \{0,1\}^n\, :\,  \varepsilon_{i_0} \neq 0
  \big\}.
\end{equation*}

For any operator $T:L_X^p \rightarrow L_X^p$, the Haar--spectrum is
defined by
\begin{equation}\label{eqn:haar--support}
  \dcubes \setminus
  \big\{ Q \in \dcubes\, :\,
  \langle T u, h_Q^{(\varepsilon)} \rangle = 0,
  \text{for all $u\in L_X^p$ and $\varepsilon \in \cal E$}
  \big\}.
\end{equation}

\bigskip

Given a collection of sets $\scr C$, we denote
\begin{equation*}
  \sigma(\scr C)
  = \Isect \big\{
    \scr A\, :\, \text{$\scr A$ is a $\sigma$--algebra},
    \scr C \subset \scr A
  \big\},
\end{equation*}
the smallest $\sigma$--algebra containing $\scr C$.

\newpage
\section{The Ring Domain Operator $S_\lambda$}\label{s:ring_domain_operator}

Here we define and study the ring domain operators $S_\lambda$,
mapping $u \in L_X^p$ onto the blocks
$\lin \big\{g_{Q,\lambda}\, :\, Q \in \dcubes \big\}$, each supported
on a ring--shaped structure, see figure~\vref{pic:ring_domain}, from
now on referred to as ring domain.
The vector--valued estimates for these operators constitute the
technical main component of this paper.

The main result for the ring domain operator is stated in
theorem~\vref{thm:ring_domain_operator}.

\subsection{Preparation}\label{ss:ring_domain_operator-preparation}\hfill

Now we turn to defining ring domains and their corresponding ring
domain operators.
Within this section the superscripts $\varepsilon$ are omitted, we
assume $\lambda \geq 0$ and generically denote $h_Q$ one of the
functions $\{h_Q^{(\varepsilon)}\}_{\varepsilon \neq 0}$.

Let $D(Q)$ be the set of discontinuities of the Haar function $h_Q$,
then
\begin{equation*}
  D_\lambda(Q)
  = \{ x \in \bb R^n\, :\,
    \dist(x, D(Q)) \leq C\, 2^{-\lambda}\, \diam(Q) \}.
\end{equation*}

First note that
\begin{equation}
  |D_\lambda(Q)| \lesssim 2^{-\lambda}\, |Q|.
\end{equation}
Now we cover the set $D_\lambda(Q)$ using dyadic cubes
$E(Q)$ having diameter
\begin{equation*}
  \diam(E(Q)) = 2^{-\lambda}\, \diam(Q),
\end{equation*}
and call the collection of those cubes $\cal U_\lambda(Q)$.
More precisely,
\begin{equation}\label{eqn:dyadic_support}
  \cal U_\lambda(Q)
  = \big\{ E \in \dcubes\, :\, \diam(E) = 2^{-\lambda}
    \diam(Q),\,  E \isect D_\lambda(Q) \neq \emptyset \big\}.
\end{equation}
The pointset $U_\lambda(Q)$ covered by $\cal U_\lambda(Q)$
is illustrated by the shaded region in figure~\vref{pic:ring_domain},
wherein the dashed lines represent the set of discontinuities $D(Q)$.
\begin{figure}[bt]
  \centering
  \includegraphics{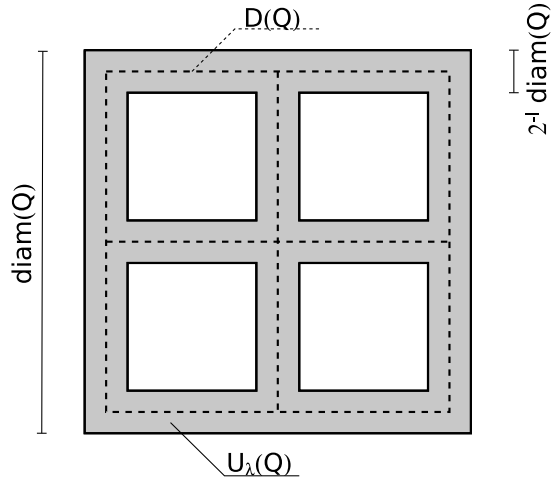}
  \caption{Ring domain (shaded region).}
  \label{pic:ring_domain}
\end{figure}

The cardinality $\card \cal U_\lambda(Q)$ does not depend on the
choice of $Q$, precisely
\begin{equation}
  \card \cal U_\lambda(Q) \approx 2^{\lambda(n-1)}.
\end{equation}
Now we define the functions $g_{Q,\lambda}$
associated to the ring domain $\cal U_\lambda(Q)$ as
\begin{equation}\label{eqn:ring_domain_function}
  g_{Q,\lambda}
  = \sum_{E \in \cal U_\lambda(Q)} h_E.
\end{equation}
The ring domain operator onto
$\lin \big\{g_{Q,\lambda}\, :\, Q \in \dcubes \big\}$
is then given by:
\begin{equation}\label{eqn:ring_domain_operator}
  S_\lambda u
  = \sum_{Q \in \dcubes} \langle u, h_Q \rangle\, g_{Q,\lambda} |Q|^{-1}.
\end{equation}

\bigskip

The main tools for analyzing $S_\lambda$ are on one hand Figiel's
shift operators $T_m$, $m \in \bb Z^n$, defined as linear extension of
the map
\begin{equation*}
  T_m h_Q = h_{Q+m\, \sidelength(Q)},
\end{equation*}
and on the other hand Bourgain's version of Stein's martingale
inequality.

\bigskip

Before beginning to analyze our ring domain operator $S_\lambda$,
we decompose $g_{Q,\lambda}$ into a sum of no more than $3 n$
functions, well localised in the vicinity of the set of the
discontinuities of the Haar function $h_Q$.
So for any $Q \in \dcubes$ we partition
\begin{equation*}
  \cal U_\lambda(Q) = \Union_{i = 1}^{3n} \cal U_\lambda^{(i)}(Q),
\end{equation*}
such that for all $E \in \cal U_\lambda^{(i)}(Q)$ holds
\begin{equation*}
  \cal U_\lambda^{(i)}(Q)
  \subset \big\{ E + j\, u_i\, :\, j \in \bb Z \big\},
\end{equation*}
where $u_i$ is one of the standard unit vectors of $\bb R^n$.
This partition induces a splitting of the blocks $g_{Q,\lambda}$ into
\begin{equation*}
  g_{Q,\lambda} = \sum_{i=1}^{3n} g_{Q,\lambda}^{(i)}.
\end{equation*}

We denote one of the functions $g_{Q,\lambda}^{(i)}$ decomposing
$g_{Q,\lambda}$ generically by $g_{Q,\lambda}$ again, and we may
assume that the support of $g_{Q,\lambda}$ is aligned orthogonal to
$e_1 \in \bb R^n$, where $e_1 = (1,0,\ldots,0)$.
We split the operator $S_\lambda$ accordingly, and denote the
operator aligned orthogonal to $e_1$ by $S_\lambda$ again. So we
have analogously to equation~\eqref{eqn:ring_domain_operator}
\begin{equation*}
  S_\lambda u
  = \sum_{Q \in \dcubes} \langle u, h_Q \rangle\, g_{Q,\lambda} |Q|^{-1},
\end{equation*}
with the support of $g_{Q,\lambda}$ now beeing localized in the
vicinity of just one of the $3$ faces of $D(Q)$ perpendicular to
$e_1$.

Recalling \eqref{eqn:shift_operator_1} it is easy to see that for any
$u = \sum_{Q \in \dcubes} u_Q\, h_Q\, |Q|^{-1} \in L_X^p$ one can find
functions $\{c_Q\}_{Q \in \dcubes}$, $|c_Q| = 1$ such that
\begin{equation*}
  \sum_{Q \in \dcubes} c_Q\, u_Q\, h_Q\, |Q|^{-1}
  = \sum_{m=0}^{2^\lambda-1} T_{m\,e_1} S_\lambda u
  = \sum_{m=0}^{2^\lambda-1} S_\lambda^m u,
\end{equation*}
where we defined
\begin{equation}\label{eqn:ring_domain_operator--shifted}
  S_\lambda^m u = T_{m\,e_1} S_\lambda u.
\end{equation}
Precisely, $c_Q$ is given by
\begin{equation*}
  c_Q = h_Q \cdot \sum_{m=0}^{2^\lambda-1} T_{m\, e_1} g_{Q,\lambda}.
\end{equation*}
Note that the shifted support strips $U_\lambda(Q)$ of
$T_{m\,e_1} g_{Q,\lambda}$, $0 \leq m < 2^\lambda$ cover the whole
cube $Q$ (see figure~\vref{pic:shifting_a_strip}).
\begin{figure}[bt]
  \centering
  \includegraphics{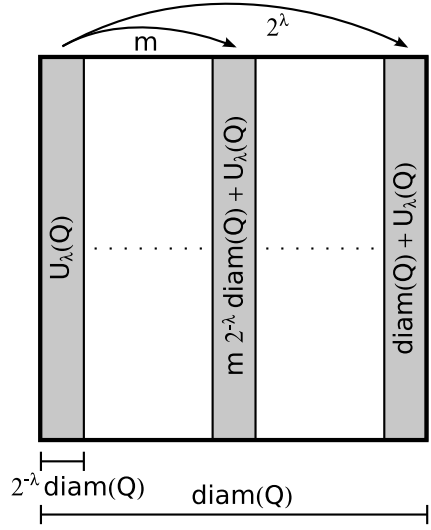}
  \caption{The thin blocks $g_{Q,\lambda}$ with shaded support
    $U_\lambda(Q)$ are shifted to cover the whole cube Q.}
  \label{pic:shifting_a_strip}
\end{figure}
So the well known \umd--property and Kahane's contraction principle
imply
\begin{equation}\label{eqn:ring_domain_operator--sum}
  \|u\|_{L_X^p} \approx
  \big\|
    \sum_{Q \in \dcubes} c_Q\, u_Q\, h_Q\, |Q|^{-1}
  \big\|_{L_X^p}
  =
  \big\|
    \sum_{m=0}^{2^\lambda-1} S_\lambda^m u
  \big\|_{L_X^p}.
\end{equation}

\subsection{Estimates for the Ring Domain Operator}\label{ss:ring_domain_operator-estimates}\hfill

The next Lemma analyzes the spectrum of $S_\lambda$ and prepares for
the construction of atoms, used later in the martingale estimates for
$S_\lambda$.

Before we state the lemma, we build up some notation.
Let $\pred_\lambda\, :\, \dint \rightarrow \dint$, and define for any
$I \in \dint$
\begin{equation*}
  \pred_\lambda(I) = J,
\end{equation*}
where the uniquely determined $J \in \dint$ is such that
$|J| = 2^\lambda\, |I|$ and $J \supset I$.
Furthermore let
\begin{equation*}
  \scr B \subset
  \{ I \in \dint\, :\, \inf I = \inf \pred_\lambda(I) \},
\end{equation*}
such that for all $J,K \in \scr B$ with $|J| \neq |K|$ holds that
\begin{equation*}
  |J| \leq \frac{1}{4}\, |K|
  \quad \text{or} \quad
  |K| \leq \frac{1}{4}\, |J|.
\end{equation*}

\begin{lem}\label{lem:shift-lemma}
  For any $\lambda \geq 1$ let $0 \leq m \leq 2^{\lambda-1}$,
  \begin{equation*}
    \tau(I) = I + m\, |I|, \qquad I \in \dint,
  \end{equation*}
  then
    \begin{equation*}
    \big|
      I \isect
      \Union_{d = 1}^{\lambda-1}
      \Union_{\substack{J \in \scr B\\ |J| = 2^{-d}\, |I|}}
      J \union \tau_m(J)
    \big|
    \leq \frac{2}{3}\, |I|,
  \end{equation*}
  for all $I \in \scr B$.
\end{lem}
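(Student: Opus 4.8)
The plan is to fix $I \in \scr B$ and estimate the measure of the portion of $I$ covered by the sets $J \union \tau_m(J)$ as $J$ ranges over elements of $\scr B$ strictly smaller than $I$. The key structural facts I would exploit are: (i) every $J \in \scr B$ satisfies $\inf J = \inf \pred_\lambda(J)$, so $J$ is pinned to the left endpoint of its $\lambda$-fold dyadic ancestor; (ii) the lacunarity condition on $\scr B$ forces the sidelengths $|J|$ appearing below a fixed scale to be well separated, so that for each $d \geq 1$ there is \emph{at most one} $J \in \scr B$ with $|J| = 2^{-d}|I|$ that can meet $I$ (two such $J$ of equal length sitting in $\scr B$ and both touching $I$ would violate the pinning/separation combination); and (iii) the shift $\tau_m(J) = J + m|J|$ with $0 \le m \le 2^{\lambda-1}$ moves $J$ by at most $2^{\lambda-1}|J|$, which for $|J| = 2^{-d}|I|$, $d \le \lambda - 1$, is at most $|I|/2$ — hence both $J$ and $\tau_m(J)$ stay within a controlled neighborhood of $I$.

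Concretely, I would first argue that for each $d$ with $1 \le d \le \lambda-1$, the union $\Union_{J \in \scr B,\, |J| = 2^{-d}|I|} (J \union \tau_m(J))$ has measure at most $(1+m) 2^{-d}|I| \le 2^{\lambda-1} 2^{-d}|I| \cdot(\text{something}) $ — but this crude bound is too weak, so the real point is that after intersecting with $I$ and using the pinning, only one such $J$ contributes, giving measure at most $2 \cdot 2^{-d}|I|$ (the $J$ itself plus its shift, each of length $2^{-d}|I|$). Summing the geometric series over $d = 1, \ldots, \lambda-1$ yields
\begin{equation*}
  \Big|
    I \isect \Union_{d=1}^{\lambda-1}
    \Union_{\substack{J \in \scr B\\ |J| = 2^{-d}|I|}} J \union \tau_m(J)
  \Big|
  \leq \sum_{d=1}^{\lambda-1} 2 \cdot 2^{-d}\, |I|
  \leq 2\, |I| \sum_{d=1}^{\infty} 2^{-d}
  = 2\, |I|,
\end{equation*}
which is not yet $\tfrac{2}{3}|I|$. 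To sharpen it I would use that $J$ and $\tau_m(J)$ may overlap or, more importantly, that the contribution of the shifted copy $\tau_m(J)$ to $I$ is limited: since $J$ is pinned to the left end of $\pred_\lambda(J)$ and $m \le 2^{\lambda-1}$, the set $\tau_m(J)$ lies in the \emph{left half} of $\pred_\lambda(J) \supseteq $ (the relevant dyadic block inside $I$), so the union $J \union \tau_m(J)$ at scale $d$ is contained in an interval of length $2^{\lambda-1}\cdot 2^{-d}|I| \le |I|/2$ anchored at the left of a dyadic subinterval of $I$ of length $2^{-d+1}\lambda \cdots$ — the efficient estimate being that the scale-$d$ contribution is at most $\tfrac{1}{2}\cdot 2^{-d+1}|I| = 2^{-d}|I|$, and summing $\sum_{d \ge 1} 2^{-d}|I|$ gives exactly $|I|$; one then absorbs the remaining factor by noting the scale-$(\lambda-1)$ term and the shift geometry leave a definite gap, trimming the bound to $\tfrac{2}{3}|I|$.

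The main obstacle, and the step requiring genuine care rather than routine estimation, is precisely this last sharpening: proving that the constant can be taken as $\tfrac{2}{3}$ rather than $1$ or $2$. This hinges on a careful bookkeeping of where the pinned intervals $J$ and their shifts $\tau_m(J)$ sit relative to $I$ — using that $\inf J = \inf \pred_\lambda(J)$ together with $m \le 2^{\lambda-1}$ to confine $J \union \tau_m(J)$ to a specific half of a dyadic subinterval, and that across the scales $d = 1, \ldots, \lambda-1$ these confined pieces are \emph{nested from the left} so that their union is a single left-anchored interval whose total length is bounded by a geometric sum that falls short of $|I|$ by a fixed fraction. I would organize this as: (1) reduce to one $J$ per scale via the $\tfrac14$-separation; (2) locate $J \union \tau_m(J)$ inside a left-anchored dyadic subinterval of $I$ of length $2^{-(d-1)}|I|$; (3) observe the nesting/telescoping and conclude the geometric bound; (4) optimize the constant to $\tfrac23$ using the $m \le 2^{\lambda-1}$ restriction at the smallest relevant scale.
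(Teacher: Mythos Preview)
Your overall plan---show that at most one $J \in \scr B$ per scale $|J| = 2^{-d}|I|$ contributes, then sum a geometric series---is exactly the paper's approach. But you have misplaced the role of the $\tfrac14$-lacunarity hypothesis, and this is why you cannot reach $\tfrac23$.

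The uniqueness per scale does \emph{not} come from the $\tfrac14$-separation (that hypothesis concerns intervals of \emph{different} sizes). It comes from the pinning condition $\inf J = \inf \pred_\lambda(J)$: two distinct $J, K \in \scr B$ of the \emph{same} size sit at the left ends of distinct $\lambda$-ancestors, hence $\dist(J,K) \ge (2^\lambda - 1)|J|$, and then $\dist(J \cup \tau_m(J),\, K \cup \tau_m(K)) \ge (2^\lambda - 1 - m)|J| \ge (2^{\lambda-1}-1)|J|$, which already exceeds $|I| = 2^d |J|$ for $d \le \lambda-1$. That is the whole content of your step (1), and you should cite the pinning, not the lacunarity, there.

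The genuine gap is the sharpening from the naive bound $\sum_d 2 \cdot 2^{-d}|I|$ down to $\tfrac23 |I|$. Your geometric discussion (left halves, nesting, ``trimming'') is vague and does not produce any definite constant below $1$. The correct mechanism is purely combinatorial: the lacunarity hypothesis on $\scr B$ says that distinct lengths in $\scr B$ differ by a factor $\ge 4$, so the values of $d$ for which $J_d(I) \neq \emptyset$ are at least $2$ apart. Hence
\[
  \sum_{d=1}^{\lambda-1} 2\,|J_d(I)|
  \;\le\; 2 \sum_{k=1}^{\infty} 2^{-2k}\,|I|
  \;=\; \tfrac{2}{3}\,|I|.
\]
No positional analysis of $J \cup \tau_m(J)$ inside $I$ is needed beyond the crude bound $|I \cap (J \cup \tau_m(J))| \le 2|J|$; the constant $\tfrac23$ is entirely a consequence of summing $\sum 4^{-k}$ rather than $\sum 2^{-k}$.
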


\begin{proof}
  First we claim that for any $I \in \scr B \union \tau_m(\scr B)$,
  $1 \leq d \leq \lambda - 1$ and $J,K \in \scr B$ with
  $|J| = |K| = 2^{-d}\, |I|$ holds
  \begin{equation}\label{eqn:lem:filtration:uniqueness}
    ( J \union \tau_m(J) ) \isect I \neq \emptyset
    \quad \text{and} \quad
    ( K \union \tau_m(K) ) \isect I \neq \emptyset
    \quad \text{implies} \quad
    J = K.
  \end{equation}
  If we assume this claim does not hold true, then we can find
  intervals $J \neq K$ such that
  \begin{equation*}
    ( J \union \tau_m(J) ) \isect I \neq \emptyset
    \quad \text{and} \quad
    ( K \union \tau_m(K) ) \isect I \neq \emptyset.
  \end{equation*}
  Since $J \neq K$ we know from the definition of $\scr B$ that
  \begin{equation*}
    \dist (\tau_m(J),\tau_m(K))
    = \dist (J,K)
    \geq (2^\lambda - 1)\, |J|,
  \end{equation*}
  consequently
  \begin{equation*}
    \dist (J \union \tau_m(J),K \union \tau_m(K))
    \geq (2^\lambda - 1 - m)\, |J|.
  \end{equation*}
  Since $I$ intersects both $J \union \tau_m(J)$ and
  $K \union \tau_m(K)$, we infer
  \begin{align*}
    |I|
    & \geq \dist (J \union \tau_m(J),K \union \tau_m(K)) + 2\, |J|\\
    & \geq (2^\lambda - m + 1))\, 2^{-d}\, |I|\\
    & \geq (2^{\lambda-1} + 1)\, 2^{-d}\, |I|\\
    & > |I|,
  \end{align*}
  which is a contradiction.
  Hence~\eqref{eqn:lem:filtration:uniqueness} holds, wich means that
  if $1 \leq d \leq \lambda -1$, any interval
  $I \in \scr B \union \tau_m(\scr B)$ intersects at most one of the
  sets
  \begin{equation*}
    \{ J \union \tau_m(J) \in \scr B\, :\, |J| = 2^{-d}\, |I| \}.
  \end{equation*}
  If such a $J$ exists we denote it by $J_d(I) \in \scr B$, and define
  $J_d(I) = \emptyset$ otherwise.
  Note that for small shift widths $m$ or small $J$ it may happen
  that $J_d(I) \union \tau_m(J_d(I)) \subset I$.

  Using~\eqref{eqn:lem:filtration:uniqueness} we see that for
  every $I \in \scr B \union \scr \tau_m(\scr B)$
  \begin{align*}
    \big|
      I \isect
      \Union_{d = 1}^{\lambda-1}
      \Union_{\substack{J \in \scr B\\ |J| = 2^{-d}\, |I|}}
      J \union \tau_m(J)
    \big|
    & = \sum_{d = 1}^{\lambda-1}
      \big| I \isect \big( J_d(I) \union \tau_m(J_d(I)) \big) \big|\\
    & \leq \sum_{d = 1}^{\lambda-1} 2 \cdot |J_d(I)|\\
    & \leq 2 \cdot \sum_{d = 1}^\infty 2^{-2d}\, |I|\\
    & = \frac{2}{3}\, |I|.
  \end{align*}
  The last inequality holds since for any $J,K \in \scr B$,
  $|J| \neq |K|$ implies $|J| \leq \frac{1}{4} |K|$ or
  $|K| \leq \frac{1}{4} |J|$, thus finishing the proof of the lemma.
\end{proof}

Having verified lemma~\vref{lem:shift-lemma}, we now turn to prove the
following \emph{pointwise} estimates for $S_\lambda$.
There exists a constant $C>0$ such that
\begin{equation*}
  \frac{1}{C}\, \|S_\lambda^k u\|_{L_X^p}
  \leq \|S_\lambda^m u\|_{L_X^p}
  \leq C\, \|S_\lambda^k u\|_{L_X^p},
  \qquad 
  \lambda \geq 0,\ 0 \leq k,m < 2^\lambda,\ u \in L_X^p.
\end{equation*}

Once more we emphazise that these estimates are crucial for the proof
of the main result~\eqref{eqn:main_result}.

\begin{pro}\label{pro:ring_operator_equivalence}
  Let $X$ be a $\umd$ space, $1 < p < \infty$ and $n \in \bb N$.
  There exists $0 < C < \infty$ such that for any
  $k, m, \lambda \in \bb Z$, with $\lambda \geq 0$,
  $0 \leq k \leq 2^\lambda -1$ and $0 \leq m \leq 2^\lambda -1$ the
  estimate
  \begin{equation}\label{eqn:ring_operator_equivalence}
    \frac{1}{C}\, \|S_\lambda^k u\|_{L_X^p}
    \leq \|S_\lambda^m u\|_{L_X^p}
    \leq C\, \|S_\lambda^k u\|_{L_X^p}
  \end{equation}
  holds true for all $u \in L_X^p$.
  The constant $C$ depends on $n$, $p$ and $X$, particularly on the
  constant arising in Stein's martingale inequality.
\end{pro}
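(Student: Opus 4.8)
The plan is to reduce the general comparison of $S_\lambda^k$ and $S_\lambda^m$ to the single case $k=0$, i.e.\ to prove $\|S_\lambda^m u\|_{L_X^p} \approx \|S_\lambda^0 u\|_{L_X^p}$ uniformly in $0 \le m < 2^\lambda$; the two-sided bound between arbitrary $k$ and $m$ then follows by transitivity. Since $S_\lambda^m u = T_{m e_1} S_\lambda u$ and $S_\lambda^0 u = S_\lambda u$, this is really a statement that the Figiel shift $T_{m e_1}$, \emph{restricted to the Haar spectrum of $S_\lambda$}, is uniformly bounded with uniformly bounded inverse, beating the general logarithmic bound \eqref{eqn:shift_operator_1_estimate}. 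The mechanism promised in the Preliminaries is that on this restricted spectrum one can organize the relevant cubes into a bounded number of ``independent'' layers on which the shift acts in a controlled way, and then invoke Bourgain's version of Stein's martingale inequality \eqref{eqn:steins_martinagle_inequality} rather than Figiel's theorem.

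Concretely, I would first pass to one direction: using the decomposition $g_{Q,\lambda} = \sum_{i=1}^{3n} g_{Q,\lambda}^{(i)}$ into at most $3n$ pieces each aligned along a coordinate axis, it suffices (losing only a constant depending on $n$) to treat the operator $S_\lambda$ whose blocks are aligned orthogonal to $e_1$, as set up at the end of Section~\ref{ss:ring_domain_operator-preparation}. For such $S_\lambda$, the Haar frequencies $h_E$ appearing in $g_{Q,\lambda}$ all have $E$ with $\sidelength(E) = 2^{-\lambda}\sidelength(Q)$ and $E$ lying within distance $\lesssim 2^{-\lambda}\diam(Q)$ of one face of $D(Q)$; in the chosen coordinate the relevant intervals $I$ (the $e_1$-edges of the $E$'s) satisfy $\inf I = \inf \pred_\lambda(I)$, which is exactly the defining property of the collections $\scr B$ in Lemma~\ref{lem:shift-lemma}. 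The shift $T_{m e_1}$ moves $h_E$ to $h_{E + m\,\sidelength(E)}$, i.e.\ translates the edge interval $I$ to $\tau_m(I) = I + m|I|$ with $0 \le m \le 2^\lambda - 1$; after a harmless splitting into even and odd $m$, or by passing to $\pred_\lambda$-children at one finer scale, we may assume $0 \le m \le 2^{\lambda - 1}$ as in the Lemma.

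The heart of the argument is then to build, for each $u$, a finite (bounded in $n,p$) family of martingale-difference-type decompositions and apply Stein--Bourgain. For a fixed scale relationship, the intervals $\{J \cup \tau_m(J)\}$ over $J \in \scr B$ at a single size are disjoint, and Lemma~\ref{lem:shift-lemma} guarantees that when we stack scales $d = 1,\dots,\lambda-1$ the total overlap inside any $I \in \scr B$ is at most $\tfrac23 |I|$. This $\tfrac23 < 1$ bound is what lets one construct an increasing filtration $\scr F_1 \subset \scr F_2 \subset \cdots$ (generated by the dyadic cubes carrying the blocks $g_{Q,\lambda}$ and their shifts, via the $\sigma$-algebra notation $\salg(\scr C)$ introduced earlier) together with ``atoms'' — normalized functions adapted to $U_\lambda(Q) \cup \tau_m(U_\lambda(Q))$ — so that $S_\lambda^m u$ and $S_\lambda^0 u$ are both realized as $\sum_i r_i \cond(f_i \mid \scr F_i)$ for comparable choices of $f_i$, with the passage from one to the other absorbed by Kahane's contraction principle \eqref{eqn:kahanes_contraction_principle} (to insert $\pm 1$ or unimodular coefficients) and Kahane's inequality \eqref{eqn:kahanes_inequality} (to move between $L^1$ and $L^p$ Rademacher averages). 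Invoking \eqref{eqn:steins_martinagle_inequality} in both directions — once to conditionally expect from the $\tau_m$-shifted configuration onto the filtration adapted to $S_\lambda^0$, once the other way — yields \eqref{eqn:ring_operator_equivalence} with $C = C(n,p,X)$.

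The main obstacle I expect is the bookkeeping needed to actually exhibit $S_\lambda^m u$ as a Stein-type average over an \emph{increasing} $\sigma$-algebra filtration: one must choose the atoms and the filtration so that (a) the conditional expectations reproduce the coefficients $\langle u, h_Q\rangle |Q|^{-1}$ attached to each block, (b) the $\sigma$-algebras are genuinely nested despite the shifts overlapping the cubes $Q$ they come from (this is where $m \le 2^{\lambda-1}$ and the $\inf I = \inf \pred_\lambda(I)$ normalization matter, so that $J \cup \tau_m(J)$ still sits inside a single dyadic ancestor), and (c) the $\tfrac23$-overlap estimate of Lemma~\ref{lem:shift-lemma} is used to keep the number of layers — and hence the final constant — independent of $\lambda$. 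Everything else (the reduction to one direction, the use of \umd\ to introduce Rademachers, the two Kahane lemmas) is routine once that combinatorial filtration is in place.
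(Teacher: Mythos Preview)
Your plan is essentially the paper's proof: reduce to $\|S_\lambda^m u\| \approx \|S_\lambda^0 u\|$, use Lemma~\ref{lem:shift-lemma} on the Haar spectrum of $S_\lambda$, build a custom filtration with atoms inside $Q \cup \tau_m(Q)$, and run Stein--Bourgain plus $\umd$ and Kahane. Two points where your sketch drifts from what actually happens. First, the reduction to $0 \le m \le 2^{\lambda-1}$ is \emph{not} by ``splitting into even and odd $m$''; that does nothing to shrink the range. The paper instead uses symmetry: for $2^{\lambda-1}-1 \le m \le 2^\lambda-1$ one compares $S_\lambda^m$ to $S_\lambda^{2^\lambda-1}$ via left-shifts, and then glues the two halves through $S_\lambda^{2^{\lambda-1}}$. (The even/odd splitting that \emph{does} occur in the paper is of the scales in $\scr B$, to enforce the gap hypothesis $|J| \le \tfrac14|K|$ of Lemma~\ref{lem:shift-lemma}.) Second, the mechanism is not quite ``realize both $S_\lambda^0 u$ and $S_\lambda^m u$ as $\sum_i r_i \cond(f_i|\scr F_i)$ for comparable $f_i$''. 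The paper constructs atoms $A(Q) \subset Q \cup \tau_m(Q)$ with $|Q \cap A(Q)|,\ |\tau_m(Q) \cap A(Q)| \ge \tfrac13|Q|$ (this is where the $\tfrac23$ of Lemma~\ref{lem:shift-lemma} enters), from which one gets a \emph{pointwise} comparison $\cond((S_\lambda^0 u)_j \mid \cal F_j) \approx \cond((S_\lambda^m u)_j \mid \cal F_j)$ and a recovery estimate $(S_\lambda^0 u)_j \lesssim \cond(\cond((S_\lambda^0 u)_j \mid \cal F_j) \mid \dcubes_j)$. Stein's inequality is then applied twice, once for the dyadic filtration $\{\dcubes_j\}$ and once for the custom $\{\cal F_j\}$; this two-filtration trick is the concrete form of the bookkeeping you anticipate.
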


\begin{proof}
  In order to show~\eqref{eqn:ring_operator_equivalence}, we will
  first prove
  \begin{equation}\label{eqn:ring_operator_equivalence--reduced-1}
    \frac{1}{C}\, \|S_\lambda^0 u\|_{L_X^p}
    \leq \|S_\lambda^m u\|_{L_X^p}
    \leq C\, \|S_\lambda^0 u\|_{L_X^p},
  \end{equation}
  for all $\lambda \geq 0$ and $0 \leq m \leq
  2^{\lambda-1}$. Exploiting symmetry will also establish
  \begin{equation}\label{eqn:ring_operator_equivalence--reduced-2}
    \frac{1}{C}\, \|S_\lambda^{2^\lambda -1} u\|_{L_X^p}
    \leq \|S_\lambda^m u\|_{L_X^p}
    \leq C\, \|S_\lambda^{2^\lambda -1} u\|_{L_X^p},
  \end{equation}
  for all $\lambda \geq 0$ and
  $2^{\lambda-1} -1 \leq m \leq 2^\lambda -1$.
  Once we have~\eqref{eqn:ring_operator_equivalence--reduced-1} and
  \eqref{eqn:ring_operator_equivalence--reduced-2}, we
  gain~\eqref{eqn:ring_operator_equivalence}, since all operators
  $\{S_\lambda^m\}_m$ are uniformly equivalent to
  $S_\lambda^{2^{\lambda-1}}$ (and to $S_\lambda^{(2^{\lambda-1}-1)}$).

  \smallskip

  We begin the proof defining
  \begin{equation*}
    \scr C = \{
      Q \in \dcubes\, :\,
      \inf_{q \in Q} \langle q, e_1 \rangle
      = \inf_{p \in \pi_\lambda(Q)} \langle p, e_1 \rangle
    \}
  \end{equation*}
  and the four collections
  \begin{align*}
    \scr B_{\text{odd}}^{0} & = \Union_{j \in \bb Z}
      \Union_{\substack{k = 0\\k \text{ odd}}}^{\lambda-1}
      \scr C \isect \dint_{2 j \lambda + k},
    & \scr B_{\text{even}}^{0} & = \Union_{j \in \bb Z}
      \Union_{\substack{k = 0\\k \text{ even}}}^{\lambda-1}
      \scr C \isect \dint_{2 j \lambda + k},\\
    \scr B_{\text{odd}}^{1} & = \Union_{j \in \bb Z}
      \Union_{\substack{k = 0\\k \text{ odd}}}^{\lambda-1}
      \scr C \isect \dint_{(2j + 1) \lambda + k},
    & \scr B_{\text{even}}^{1} & = \Union_{j \in \bb Z}
      \Union_{\substack{k = 0\\k \text{ even}}}^{\lambda-1}
      \scr C \isect \dint_{(2j + 1) \lambda + k},
  \end{align*}
  each generically denoted by $\scr B$.
  Apparently $\scr C$ is exactely the Haar--spectrum of $S_\lambda^0$,
  and the collection $\scr B$ was constructed such that we may apply
  lemma~\vref{lem:shift-lemma} to its blocks of $\lambda-1$
  consecutive levels with every second scale stripped off.

  With $\scr B$ fixed we claim the existence of a filtration
  $\{\cal F_j\}_j$ such that for every $j \in \bb Z$ and
  $Q \in \scr B \isect \dcubes_j$ exists an atom $A(Q)$ of $\cal F_j$
  satisfying the inequalities
  \begin{align}\label{eqn:atom-properties}
    |A(Q)| & \leq 2\, |Q|,
    & |Q \isect A(Q)| & \geq \frac{1}{3}\, |Q|,
    & |\tau_m(Q) \isect A(Q)| & \geq \frac{1}{3}\, |Q|.
  \end{align}
  We shall use an auxiliary argument regarding overlaps of dyadic
  cubes from different $\lambda-1$--blocks, exploiting that cubes from
  different $\lambda-1$--blocks are seperated by at least $\lambda$
  levels.
  This will become more obvious when considering the following
  argument.
  Let $\tau_m$ be the right--shift operation in direction $e_1$,
  precisely
  \begin{equation*}
    \tau(Q) = Q + 2m\, \sidelength(Q)\,  e_1,
  \end{equation*}
  for all $Q \in \dcubes$.

  Now for each $Q \in \scr B$ we will define atoms inductively,
  beginning at the finest level of a $\lambda-1$ block.
  More precisely, fix an arbitrary $b \in \bb Z$ such that for
  any $Q, Q' \in \scr B$ with $|Q| = 2^{-b\, n}$ and $|Q'| < |Q|$
  follows $|Q'| \leq 2^{-\lambda\, n}\, |Q|$.
  Initially define 
  \begin{equation}\label{eqn:dfn:atom-1}
    A(Q) = Q \union \tau_m(Q),
  \end{equation}
  for $Q \in \scr B \isect \dcubes_b$.
  Assume we already constructed atoms on the scales $b,b-1,\ldots,j$,
  while $j -1 \geq b - (\lambda - 1)$, then define for all
  $Q \in \scr B \isect \dcubes_{j-1}$
  \begin{equation}\label{eqn:dfn:atom-2}
    A(Q) = \big( Q \union \tau_m(Q) \big) \setminus
    \big(
      \Union_{k = j}^b \Union_{M \in \scr B \isect \dcubes_k} A(M)
    \big).
  \end{equation}
  Applying lemma~\vref{lem:shift-lemma} in direction $e_1$ to the
  atoms $A(Q) \subset Q \union \tau_m(Q)$ inside the block
  $b, b-1, \ldots ,b - (\lambda-1)$ we gain
  \begin{equation*}
    |Q \isect A(Q)|
    = |Q| - |
      Q \isect \Union_{k = j}^b \Union_{M \in \scr B \isect \dcubes_k}
      A(M)
    |
    \geq \frac{1}{3}\, |Q|,
  \end{equation*}
  and analogously
  \begin{equation*}
    |\tau_m(Q) \isect A(Q)| \geq \frac{1}{3}\, |Q|,
  \end{equation*}
  which yields~\eqref{eqn:atom-properties}.
  Finally we define the collection
  \begin{equation}\label{eqn:dfn:atoms_of_interest}
    \cal A_j =
    \big\{
      A(Q)\, :\, Q \in \scr B \isect \dcubes_j
    \big\},
  \end{equation}
  and the filtration
  \begin{equation}\label{eqn:dfn:filtration}
    \cal F_j = \salg\Big( \Union_{i \leq j} \cal A_i \Big).
  \end{equation}
  What is left to show is that every $A \in \cal A_j$ is an atom for
  the $\sigma$--algebra $\cal F_j$.

  To see this we argue as follows.
  First note that any two atoms are either localized in the same
  $\lambda-1$--block, or are seperated by at least $\lambda$ levels.
  If atoms $A(Q)$ and $A(Q')$ are in the same $\lambda-1$--block,
  then they do not intersect per construction.
  If $A(Q)$ and $A(Q')$ intersect and
  $|Q'| \leq 2^{-\lambda\, n}\, |Q|$, then since
  \begin{equation*}
    A(Q') \subset (Q' \union \tau_m(Q')) \subset \pi_\lambda(Q')
  \end{equation*}
  we have
  \begin{equation*}
    \pi_\lambda(Q') \isect A(Q) \neq \emptyset. 
  \end{equation*}
  Clearly, $A(Q)$ comprises of cubes $K$ which are at least as big as
  $\pi_\lambda(Q')$, so $|\pi_\lambda(Q')| \leq |K|$ and consequently
  \begin{equation*}
    A(Q') \subset A(Q).
  \end{equation*}
  This means that $\Union_j \cal A_j$ is a nested collections of sets,
  hence every $A \in \cal A_j$ is an atom for the $\sigma$--algebra
  $\cal F_j$.
    
  \bigskip

  Now, after all this preparation we are about to finish the proof.
  Having~\eqref{eqn:atom-properties} at hand and knowing that
  the collection $\cal A_j$ are atoms for $\cal F_j$ one can find a
  constant $C$ depending only on the constants arising
  in~\eqref{eqn:atom-properties} such that
  \begin{equation}\label{eqn:atom-propagation-1}
    \frac{1}{C}\, \cond \big(
      (S_\lambda^0 u)_j\, |\, \cal F_j
    \big)
    \leq \cond \big(
      (S_\lambda^m u)_j\, |\, \cal F_j
    \big)
    \leq C\, \cond \big(
      (S_\lambda^0 u)_j\, |\, \cal F_j
    \big)
  \end{equation}
  where $(S_\lambda^0 u)_j$ and $(S_\lambda^m u)_j$ denote the
  restriction of the Haar expansion of $S_\lambda^0 u$ and
  $(S_\lambda^m u)_j$ to dyadic cubes in $\dcubes_j$, respectively.
  Furthermore one can see that
  \begin{align}
    (S_\lambda^0 u)_j
    & \leq C\, \cond \big(
      \cond \big( (S_\lambda^0 u)_j\, |\, \cal F_j \big)\,
      \big|\, \dcubes_j
    \big),
    \label{eqn:atom-propagation-2}
    \intertext{and similarly}
    (S_\lambda^m u)_j
    & \leq C\, \cond \big(
      \cond \big( (S_\lambda^m u)_j\, |\, \cal F_j \big)\,
      \big|\, \dcubes_j
    \big).
    \label{eqn:atom-propagation-3}
  \end{align}

  Initially, by the $\umd$--property
  \begin{equation*}
    \|S_\lambda^0 u\|_{L_X^p}^p
    \approx \int_0^1 \big\|
      \sum_{j \in \bb Z} r_j(t)\, (S_\lambda^0 u)_j
    \big\|_{L_X^p}^p
    \mathrm dt,
  \end{equation*}
  which together with Kahane's contraction principle applied
  to~\eqref{eqn:atom-propagation-2} yields
  \begin{equation*}
    \|S_\lambda^0 u\|_{L_X^p}^p
    \lesssim \int_0^1 \big\|
      \sum_{j \in \bb Z} r_j(t)\, \cond \big(
      \cond \big( (S_\lambda^0 u)_j\, |\, \cal F_j \big)\,
      \big|\, \dcubes_j
    \big)
    \big\|_{L_X^p}^p.
  \end{equation*}
  Issuing Stein's martingale
  inequality~\eqref{eqn:steins_martinagle_inequality} for the
  filtration $\{\dcubes_j\}_j$
  gives
  \begin{equation*}
    \|S_\lambda^0 u\|_{L_X^p}^p
    \lesssim \int_0^1 \big\|
      \sum_{j \in \bb Z} r_j(t)\,
      \cond \big( (S_\lambda^0 u)_j\, |\, \cal F_j \big)
    \big\|_{L_X^p}^p,
  \end{equation*}
  which is in view of~\eqref{eqn:atom-propagation-1} and Kahane's
  contraction principle dominated by a constant multiple of
  \begin{equation*}
    \big\|
      \sum_{j \in \bb Z} r_j(t)\,
      \cond \big( (S_\lambda^m u)_j\, |\, \cal F_j \big)
    \big\|_{L_X^p}^p.
  \end{equation*}
  This time we apply Stein's martingale inequality to the filtration
  $\{\cal F_j\}_j$, and subsequently make use of the $\umd$--property
  to dispose of the Rademacher functions, hence
  \begin{equation*}
    \|S_\lambda^0 u\|_{L_X^p}^p
    \lesssim \int_0^1 \big\|
      \sum_{j \in \bb Z} r_j(t)\, (S_\lambda^m u)_j
    \big\|_{L_X^p}^p
    \approx \|S_\lambda^m u\|_{L_X^p}^p.
  \end{equation*}

  Repeating this argument with $S_\lambda^0$ and $S_\lambda^m$
  interchanged and using~\eqref{eqn:atom-propagation-3} instead
  of~\eqref{eqn:atom-propagation-2} we get the converse inequality
  \begin{equation*}
    \|S_\lambda^m u\|_{L_X^p}^p
    \lesssim \| S_\lambda^0 u \|_{L_X^p}^p,
  \end{equation*}
  a fortiori we obtain
  \eqref{eqn:ring_operator_equivalence--reduced-1}, that was
  \begin{equation*}
    \frac{1}{C}\, \|S_\lambda^0 u\|_{L_X^p}
    \leq \| S_\lambda^m u \|_{L_X^p}
    \leq C\, \|S_\lambda^0 u\|_{L_X^p},
  \end{equation*}
  for all $\lambda \geq 0$, $0 \leq m \leq 2^{\lambda -1}$ and
  $u \in L_X^p$,
  where $C$ depends only on $n$, $p$ and $X$.

  \bigskip

  Observe that due to symmetry we may use the same argument for the
  operators $S_\lambda^m$, $2^{\lambda -1} \leq m \leq 2^\lambda -1$,
  when we reverse the sign of the shift operation and replace
  $S_\lambda^0$ by $S_\lambda^{2^\lambda -1}$.
  Therefore
  inequality~\eqref{eqn:ring_operator_equivalence--reduced-2} holds
  true
  \begin{equation*}
    \frac{1}{C}\, \|S_\lambda^{2^\lambda -1} u\|_{L_X^p}
    \leq \| S_\lambda^m u \|_{L_X^p}
    \leq C\, \|S_\lambda^{2^\lambda -1} u\|_{L_X^p},
  \end{equation*}
  for all $\lambda \geq 0$,
  $2^{\lambda -1} -1 \leq m \leq 2^\lambda -1$ and $u \in L_X^p$,
  where $C$ depends only on $n$, $p$ and $X$.

  Joining the last two inequalities via $S_\lambda^{2^{\lambda-1}}$ (or
  $S_\lambda^{2^{\lambda-1}-1}$) concludes the proof of the
  proposition.
\end{proof}

\begin{rem}
  By symmetry it is easy to see that
  \begin{equation*}
    \|S_\lambda^m\, :\, L_X^p \rightarrow L_X^p \|
    \approx \|S_\lambda^{2^\lambda -1 -m}
      \, :\, L_X^p \rightarrow L_X^p \|
  \end{equation*}
  holds true for all $0 \leq m \leq 2^\lambda -1$.
  Unfortunately, this does not help us with our pointwise estimates.
\end{rem}

However, we are now about to prove the main result on ring domain
operators
\begin{thm}\label{thm:ring_domain_operator}
  For $\lambda \geq 0$ let $S_\lambda$ denote the ring domain operator
  defined by
  \begin{equation*}
    S_\lambda u
    = \sum_{Q \in \dcubes}
      \langle u, h_Q \rangle\, g_{Q,\lambda} |Q|^{-1}.
  \end{equation*}
  When $L_X^p$ has cotype $\cal C(L_X^p)$, there exists a constant
  $C > 0$ such that for every $u \in L_X^p$ and $\lambda \geq 0$
  \begin{equation}\label{eqn:ring_domain_operator_estimate}
    \| S_\lambda u \|_{L_X^p}
    \leq C\, 2^{-\lambda/\cal C(L_X^p)}\, \|u\|_{L_X^p},
  \end{equation}
  where the constant $C$ depends only on $n$, $p$, $X$ and
  $\cal C(L_X^p)$.
\end{thm}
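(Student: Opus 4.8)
### Proof proposal

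The plan is to exploit the decomposition $S_\lambda = \sum_{m=0}^{2^\lambda-1} S_\lambda^m$ in reverse: from \eqref{eqn:ring_domain_operator--sum} we already know that the shifted copies $S_\lambda^m u$ add up (with unimodular coefficients absorbed) to something of size $\|u\|_{L_X^p}$, and from Proposition~\ref{pro:ring_operator_equivalence} we know all $2^\lambda$ pieces $\|S_\lambda^m u\|_{L_X^p}$ are mutually comparable. The point of \eqref{eqn:ring_domain_operator--sum} is that $\bigl\|\sum_m S_\lambda^m u\bigr\|_{L_X^p}\approx \|u\|_{L_X^p}$, and since the supports of the various $S_\lambda^m u$ are (essentially) disjoint Haar blocks sitting at the same collection of scales, the sum $\sum_m S_\lambda^m u$ should behave like a sum of independent, zero-mean pieces. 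So first I would insert Rademacher signs: using the $\umd$-property together with the fact that $\{S_\lambda^m u\}_m$ can be realized as a martingale difference sequence (each $S_\lambda^m$ has Haar spectrum in the translated collection $\tau_{me_1}(\scr C)$, and these are disjoint in $m$), I would pass from $\bigl\|\sum_{m} S_\lambda^m u\bigr\|_{L_X^p}$ to $\int_0^1\bigl\|\sum_{m} r_m(t)\,S_\lambda^m u\bigr\|_{L_X^p}\,dt$, which is comparable to $\|u\|_{L_X^p}$ by \eqref{eqn:ring_domain_operator--sum}.

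Next I would apply the definition of cotype \eqref{eqn:cotype} (after a Kahane's-inequality step to match the $L^1$-in-$t$ average with whatever power is convenient) to the randomized sum, treating the $S_\lambda^m u\in L_X^p$ as the finite family of vectors. This yields
\begin{equation*}
  \Bigl(\sum_{m=0}^{2^\lambda-1}\|S_\lambda^m u\|_{L_X^p}^{\cal C(L_X^p)}\Bigr)^{1/\cal C(L_X^p)}
  \lesssim \int_0^1\Bigl\|\sum_{m} r_m(t)\,S_\lambda^m u\Bigr\|_{L_X^p}\,dt
  \lesssim \|u\|_{L_X^p}.
\end{equation*}
Now Proposition~\ref{pro:ring_operator_equivalence} enters: since $\|S_\lambda^m u\|_{L_X^p}\approx \|S_\lambda^0 u\|_{L_X^p}=\|S_\lambda u\|_{L_X^p}$ uniformly in $m$ and $\lambda$, the left-hand side is $\gtrsim 2^{\lambda/\cal C(L_X^p)}\,\|S_\lambda u\|_{L_X^p}$. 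Rearranging gives $\|S_\lambda u\|_{L_X^p}\lesssim 2^{-\lambda/\cal C(L_X^p)}\|u\|_{L_X^p}$, which is \eqref{eqn:ring_domain_operator_estimate}. One should also handle the reductions made just before Proposition~\ref{pro:ring_operator_equivalence} — recall $S_\lambda$ was already split into at most $3n$ pieces $g_{Q,\lambda}^{(i)}$ aligned along coordinate axes — so the final bound picks up a harmless factor depending only on $n$, which is absorbed into $C$.

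The main obstacle is the very first step: justifying that the Rademacher average of the $S_\lambda^m u$ is comparable to $\|u\|_{L_X^p}$, i.e. upgrading \eqref{eqn:ring_domain_operator--sum} to a randomized statement. The clean way is to observe that for fixed scale the blocks $\{S_\lambda^m u\}_m$ live on disjoint dyadic sub-cubes of each $Q\in\scr C$ (the shifted strips $\tau_{me_1}g_{Q,\lambda}$ tile $Q$), so after conditioning on the coarse $\sigma$-algebra generated by $\scr C$ the family $\sum_m r_m(t)S_\lambda^m u$ is again a genuine martingale transform of $\sum_m S_\lambda^m u$ with $\pm1$ multipliers that are measurable at the appropriate level; then the $\umd$-inequality \eqref{eqn:umd--property} applies directly and is reversible. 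Some care is needed because the collection $\scr C$ involves all scales $2j\lambda+k$ simultaneously, so the ``multiplier'' argument must be organized scale-by-scale (exactly as in the blocking used for $\scr B_{\mathrm{odd/even}}^{0/1}$ in the previous proof); but this is the same mechanism already invoked to prove \eqref{eqn:ring_domain_operator--sum}, so no genuinely new estimate is required — only a careful bookkeeping of which Haar cubes carry which sign. Everything else (Kahane, cotype, and the plug-in of Proposition~\ref{pro:ring_operator_equivalence}) is routine.
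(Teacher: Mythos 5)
Your proposal is correct and follows essentially the same route as the paper: decompose $u$ into the shifted copies $S_\lambda^m u$ via \eqref{eqn:ring_domain_operator--sum}, use the disjointness of their Haar spectra together with the $\umd$--property to pass to a Rademacher average, apply the cotype inequality, and then invoke Proposition~\ref{pro:ring_operator_equivalence} to replace each $\|S_\lambda^m u\|_{L_X^p}$ by $\|S_\lambda u\|_{L_X^p}$. In fact you spell out the randomization step that the paper leaves implicit in the phrase ``enables us to use the cotype inequality,'' so no gap remains.
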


\begin{proof}
  A simple application of Kahane's contraction principle shows that
  the estimate holds if we restrict $\lambda$ to
  $0 \leq \lambda \leq 1$.

  \smallskip

  So from now on we may assume $\lambda \geq 2$.

  For $0 \leq m \leq 2^\lambda -1$ we defined
  in~\eqref{eqn:ring_domain_operator--sum}, the shifted ring domain
  operators
  \begin{equation*}
    S_\lambda^m u = T_{m\,e_1} S_\lambda u.
  \end{equation*}
  Observe that for all $u \in L_X^p$ and
  $0 \leq k \neq m \leq 2^\lambda -1$
  \begin{equation}\label{ring_domain_operator--shifted--disjoint}
    \big\{ Q \in \dcubes\, :\,
      \langle S_\lambda^k u, h_Q \rangle \neq 0
    \big\}
    \isect
    \big\{ Q \in \dcubes\, :\,
      \langle S_\lambda^m u, h_Q \rangle \neq 0
    \big\}
    = \emptyset.
  \end{equation}

  According to~\eqref{eqn:ring_domain_operator--sum} we know
  \begin{equation*}
    \|u\|_{L_X^p}
    \approx \big\|
      \sum_{m=0}^{2^\lambda-1} S_\lambda^m u
    \big\|_{L_X^p},
  \end{equation*}
  and since~\eqref{ring_domain_operator--shifted--disjoint} enables
  us to use the cotype inequality in $L_X^p$ we gain
  \begin{equation*}
    \|u\|_{L_X^p}
    \gtrsim \bigg(
      \sum_{m=0}^{2^\lambda-1}
      \big\| S_\lambda^m u \big\|_{L_X^p}^{\cal C(L_X^p)}
    \bigg)^{1/\cal C(L_X^p)}.
  \end{equation*}
  Applying the result~\eqref{eqn:ring_operator_equivalence} of
  proposition~\vref{pro:ring_operator_equivalence}, which guarantees
  that the operators $S_\lambda^m$, $0 \leq m \leq 2^\lambda -1$ are
  \emph{pointwise} norm--equivalent to $S_\lambda = S_\lambda^0$,
  reveals the end of the proof
  \begin{equation*}
    \|u\|_{L_X^p}
    \gtrsim \bigg(
      \sum_{m=0}^{2^\lambda-1}
      \big\| S_\lambda u \big\|_{L_X^p}^{\cal C(L_X^p)}
    \bigg)^{1/\cal C(L_X^p)}
    = 2^{\lambda/\cal C(L_X^p)}\, \big\| S_\lambda u \big\|_{L_X^p}.
  \end{equation*}
\end{proof}

Repeating the proof of theorem~\ref{thm:ring_domain_operator}
without proposition~\ref{pro:ring_operator_equivalence} using
Figiel's bound on shift
operators~\eqref{eqn:shift_operator_1_estimate} directly, would result
in
\begin{equation*}
  \| S_\lambda u \|_{L_X^p}
  \leq C\, \lambda^{\alpha}\,  2^{-\lambda/\cal C(L_X^p)}\,
    \|u\|_{L_X^p},
\end{equation*}
where $L_X^p$ has cotype $\cal C(L_X^p)$, and the constant $C$ depends
only on $n$, $p$, $X$ and $\cal C(L_X^p)$. The exponent
$0 < \alpha < 1$ is the exponent occurring in Figiel's
estimate~\eqref{eqn:shift_operator_1_estimate}.

\newpage
\section{Estimates for $P_l^{(\varepsilon)}$ and $P_l^{(\varepsilon)} R_{i_0}^{-1}$}\label{s:estimates_mollified}\hfill

For any $u \in L_X^p$ with $1 < p < \infty$ fixed, define
$ P^{(\varepsilon)}\, :\, L_X^p \longrightarrow L_X^p$ by setting
\begin{equation}\label{eqn:directional_projection}
  P^{(\varepsilon)} u
  = \sum_{Q \in \dcubes}
    \langle u, h_Q^{(\varepsilon)} \rangle\,
    h_Q^{(\varepsilon)}\, |Q|^{-1}.
\end{equation}

In order to estimate the directional Haar projection operator
$P^{(\varepsilon)}$, we will decompose $P^{(\varepsilon)}$ in
subsection~\vref{ss:decomposition} into a series of mollified
operators $\sum_l P_l^{(\varepsilon)}$,
following~\cite{lee_mueller_mueller:2007}.
Subsequentely, J. Lee, P. F. X. Mueller and S. Mueller used wavelet
expansions to further analyze $P_l^{(\varepsilon)}$.

However, in this paper $P_l^{(\varepsilon)}$ is decomposed into a
series of ring domain operators
$\sum_{\lambda(l)} c_{\lambda(l)}S_{\lambda(l)}$, using martingale
methods complying with $\umd$--spaces.

This is done in section~\vref{s:estimates_mollified}, where all
non--trivial estimates for the operators $P_l^{(\varepsilon)}$ and
$P_l^{(\varepsilon)} R_{i_0}^{-1}$ are obtained from the inequalities
for the ring domain operators $S_\lambda$, analyzed in
section~\ref{s:ring_domain_operator}.

\subsection{Decomposition of $P^{(\varepsilon)}$}\label{ss:decomposition}\hfill

We give a brief overview of the Littlewood--Paley decomposition used
in~\cite{lee_mueller_mueller:2007}, and continue with further
decompositions in subsection~\ref{ss:integral_kernel}
and~\ref{ss:decomposition_estimates} suited for the $\umd$--domain.

\bigskip

As in~\cite{lee_mueller_mueller:2007}, we employ a compactly
supported, smooth approximation of the identity, to obtain a
decomposition of the directional projection $P^{(\varepsilon)}$ into a
series of mollified operators
\begin{equation}\label{eqn:directional_projection--decomposition}
  P^{(\varepsilon)} = \sum_{l \in \bb Z} P_l^{(\varepsilon)}.
\end{equation}

First we fix $b \in C_c^\infty(]0,1[^n)$ such that
\begin{equation}\label{eqn:decomposition:vanishing_moments}
  \gint{x}{b(x)} = 1,
  \quad \text{and} \quad
  \gint{x_i}{x_i\, b(x_1,\ldots,x_i,\ldots,x_n)} = 0,
\end{equation}
for all $1 \leq i \leq n$.
This can be easily achieved in the Fourier domain.
Let $l \in \bb Z$ and define
\begin{equation}\label{eqn:decomposition:convolution}
  \Delta_l u = u * d_l,
  \quad \text{where} \quad
  d_l(x) = 2^{ln} d(2^l x)
  \quad \text{and} \quad
  d(x) = 2^n\, b(2\, x) - b(x).
\end{equation}
For any $u \in L_X^p(\bb R^n)$ holds that
\begin{equation}\label{eqn:decomposition:convolution-1}
  u = \sum_{l \in \bb Z} \Delta_l u,
\end{equation}
where the series converges in $L_X^p$.
Denoting $\dcubes_j \subset \dcubes$ the collection of all dyadic
cubes having measure $2^{-jn}$, we set
\begin{equation}\label{eqn:dfn:mollified_operator--1}
  P_l^{(\varepsilon)} u
  = \sum_{j \in \bb Z} \sum_{Q \in \dcubes_j}
    \langle u, \Delta_{j+l}(h_Q^{(\varepsilon)}) \rangle\,
    h_Q^{(\varepsilon)}\, |Q|^{-1}
\end{equation}
and observe that by~\eqref{eqn:decomposition:convolution-1} for all
$u \in L_X^p$
\begin{equation*}
  P^{(\varepsilon)} u = \sum_{l \in \bb Z} P_l^{(\varepsilon)} u,
\end{equation*}
where equality holds in the sense of $L_X^p$.
Setting $f_{Q,l}^{(\varepsilon)} = \Delta_{j+l} h_Q^{(\varepsilon)}$, if
$Q \in \dcubes_j$, we rewrite~\eqref{eqn:dfn:mollified_operator--1}
as 
\begin{equation}\label{eqn:dfn:mollified_operator--2}
  P_l^{(\varepsilon)} u
  = \sum_{Q \in \dcubes}
    \langle u, f_{Q,l}^{(\varepsilon)} \rangle\,
    h_Q^{(\varepsilon)}\, |Q|^{-1}.
\end{equation}

\subsection{The Integral Kernel of $P_l^{(\varepsilon)}$}\label{ss:integral_kernel}\hfill

In this subsection we identify the integral kernel
$K_l^{(\varepsilon)}$ of the operator $P_l^{(\varepsilon)}$ and expand
it in its Haar series, exploiting Figiel's martingale approach.
At this point we deviate significantly from the methods
of~\cite{lee_mueller_mueller:2007}.

We intend to use martingale methods on the operators
$P_l^{(\varepsilon)}$, therefore will take a close look at their
kernels $K_l^{(\varepsilon)}$,
\begin{align}
  \big(P_l^{(\varepsilon)}u\big) (x)
  = \gint{y}{K_l^{(\varepsilon)}(x,y)\, u(y)},
  \label{eqn:mollified_operator}
  \intertext{where}
  K_l^{(\varepsilon)}(x,y)
  = \sum_{Q \in \dcubes}
    h_Q^{(\varepsilon)}(x)\, f_{Q,l}^{(\varepsilon)}(y)\, |Q|^{-1}.
  \label{eqn:mollified_kernel}
\end{align}

Expanding $K_l^{(\varepsilon)}$ according to Figiel's approach into
the series
\begin{equation}\label{eqn:kernel--figiel_expansion}
  \sum_{
    \substack{
      \alpha, \beta \in \{0,1\}^n\\
      (\alpha,\beta) \neq 0
    }
  }
  \sum_{
    \substack{
      K,M,Q \in \dcubes\, :\\
      |K| = |M|
    }
  }
  \langle h_Q^{(\varepsilon)}, h_K^{(\alpha)} \rangle
  \langle f_{Q,l}^{(\varepsilon)}, h_M^{(\beta)} \rangle
  |K|^{-1} |M|^{-1} |Q|^{-1}
  h_K^{(\alpha)}(x)\, h_M^{(\beta)}(y),
\end{equation}
we will have to distinguish the following settings for the parameter
$\beta$:
\begin{enumerate}
\item $\beta \neq 0$,\label{enu:beta_nonzero}
\item $\beta = 0$\label{enu:beta_zero}.
\end{enumerate}
Note that due to the condition $(\alpha, \beta) \neq 0$,
case~\eqref{enu:beta_zero} certainly implies $\alpha \neq 0$.

To ease the notation, we will make use of the following convention.
We shall write $h_Q$, denoting one of the functions $h_Q^{(\gamma)}$,
$\gamma \in \{0,1\}^n\setminus \{0\}$, and $\charfun_Q$ for the
characteristic funtion $h_Q^0$.
We may do so since the \umd--property and Kahane's contraction
principle enable us to interchange equally supported Haar functions
having zero mean.

Using this notation, then the Figiel
expansion~\eqref{eqn:kernel--figiel_expansion} according to the
two different cases $(\beta \neq 0)$ and $(\beta = 0$,
$\alpha \neq 0)$ \emph{both} read
\begin{equation}\label{eqn:kernel--figiel_expansion_simplified}
  K_l(x,y) = 
  \sum_{M,Q \in \dcubes}
  \langle f_{Q,l}, h_M \rangle
  |M|^{-1} |Q|^{-1} h_Q(x)\, h_M(y).
\end{equation}
This is exactly the Haar expansion of $K_l$ in the $y$--coordinate,
actually not so surprising since we initially
had Haar functions in the $x$--coordinate
(see \eqref{eqn:kernel--figiel_expansion}).
Figiel's expansion in $\bb R^{2n}$ breaks up the Haar functions
$h_Q^{(\varepsilon)}$ into smaller pieces and reassembles them,
subsequently.
We might have seen the algebraic form
\eqref{eqn:kernel--figiel_expansion_simplified} simply by plugging the
Haar series of $u$ into the operator $P_l^{(\varepsilon)}$.
However, after a few purely algebraic manipulations, Figiel's
expansion in both coordinates yields
identity~\eqref{eqn:kernel--figiel_expansion_simplified}.

\smallskip

Now we present an accurate justification for
identity~\eqref{eqn:kernel--figiel_expansion_simplified}.
Therefore, we fix $\beta \in \{0,1\}^n\setminus\{0\}$,
$\alpha \in \{0,1\}^n$ and rewrite
\eqref{eqn:kernel--figiel_expansion}
\begin{align*}
  K_l(x,y)
  & = \sum_{
    \substack{
      K,M,Q \in \dcubes:\\
      |K| = |M|
    }
  }
  \langle h_Q, h_K^{(\alpha)} \rangle\, 
  \langle f_{Q,l}, h_M \rangle\, 
  |K|^{-1} |M|^{-1} |Q|^{-1} h_K^{(\alpha)}(x)\, h_M(y)\\
  & = \sum_{M,Q \in \dcubes}
  \langle f_{Q,l}, h_M \rangle\, |M|^{-1} |Q|^{-1} h_M(y)
  \sum_{
    \substack{
      K \in \dcubes:\\
      |K| = |M|
    }
  }
  \langle h_Q, h_K^{(\alpha)} \rangle
  |K|^{-1} h_K^{(\alpha)}(x).
\end{align*}
In both cases $\alpha = 0$ and $\alpha \neq 0$ the inner sum
\begin{equation*}
  \sum_{
    \substack{
      K \in \dcubes:\\
      |K| = |M|
    }
  }
  \langle h_Q, h_K^{(\alpha)} \rangle\, 
  |K|^{-1} h_K^{(\alpha)}(x)
\end{equation*}
is identically
\begin{equation*}
  h_Q(x) \qquad \text{for all $Q$ and $M$},
\end{equation*}
for beeing either the conditional expectation of
$h_Q$, or exploiting the orthogonality of the Haar basis,
respectively.
Hence we obtain \eqref{eqn:kernel--figiel_expansion_simplified}.

Now let $\beta = 0$, which implies $\alpha \neq 0$ as noted before,
therefore Figiel's expansion~\eqref{eqn:kernel--figiel_expansion}
reads
\begin{align*}
  K_l(x,y)
  & = \sum_{
    \substack{
      K,M,Q \in \dcubes\, :\\
      |K| = |M|
    }
  }
  \langle h_Q, h_K \rangle\,
  \langle f_{Q,l}, \charfun_M \rangle\,
  |K|^{-1} |M|^{-1} |Q|^{-1}\, h_K(x)\, \charfun_M(y)\\
  & = \sum_{
    \substack{
      M,Q \in \dcubes\, :\\
      |M| = |Q|
    }
  }
  \langle f_{Q,l}, \charfun_M \rangle\,
  |M|^{-1} |Q|^{-1} h_Q(x)\, \charfun_M(y).
\end{align*}
Evaluating this expansion on the Haar series of $u$ would correspond
to developing the $y$--component of $K_l(x,y)$ in a mean zero Haar
series, so we proceed
\begin{align*}
  K_l(x,y)
  & = \sum_{
    \substack{
      K,M,Q \in \dcubes\, :\\
      |M| = |Q|
    }
  }
  \langle f_{Q,l}, \charfun_M \rangle\,
  \langle h_K, \charfun_M \rangle\,
  |K|^{-1} |M|^{-1} |Q|^{-1} h_Q(x)\, h_K(y)\\
  & = \sum_{K,Q \in \dcubes}
  h_Q(x)\, h_K(y)\, |K|^{-1} |Q|^{-1} 
  \sum_{
    \substack{
      M \subsetneqq K\\
      |M| = |Q|
    }
  }
  \langle f_{Q,l}, \charfun_M \rangle\,
  \langle h_K, \charfun_M \rangle\,
  |M|^{-1}\\
  & = \sum_{K,Q \in \dcubes}
  h_Q(x)\, h_K(y)\, |K|^{-1} |Q|^{-1} 
  \Big\langle f_{Q,l},
  \sum_{
    \substack{
      M \subsetneqq K\\
      |M| = |Q|
    }
  }
  \charfun_M \, \langle h_K, \charfun_M \rangle\, |M|^{-1}
  \Big\rangle.
\end{align*}
Observe, the inner sum with $K$ and $Q$ fixed is the conditional
expectation of $h_K$ at a finer scale, hence reproducing $h_K$
\begin{equation*}
  \sum_{
    \substack{
      M \subsetneqq K\\
      |M| = |Q|
    }
  }
  \charfun_M \, \langle h_K, \charfun_M \rangle\, |M|^{-1}
  = h_K,
\end{equation*}
and we gain
\begin{equation*}
  K_l(x,y)
  = \sum_{K,Q \in \dcubes}
  \langle f_{Q,l}, h_K \rangle
  |K|^{-1} |Q|^{-1} h_Q(x)\, h_K(y).
\end{equation*}
Note that we may lift the restriction $|Q| < |K|$, since the
sum~\eqref{eqn:kernel--figiel_expansion_simplified} is parametrized
according to the ratio of the diameters of $Q$ and $M$ in
subsection~\ref{ss:decomposition_estimates}, and split using the
triangle inequality.

\bigskip

As a consequence we may assume the generic
expansion~\eqref{eqn:kernel--figiel_expansion_simplified} of the
integral kernel $K_l(x,y)$ in order to estimate $P_l$.

\subsection{Estimates for $P_l^{(\varepsilon)}$}\label{ss:decomposition_estimates}\hfill

After analyzing some basic properties of the mollified Haar functions
$f_{Q,l}^{(\varepsilon)}$ we turn to estimating $P_l^{(\varepsilon)}$,
guided by the behaviour of $f_{Q,l}^{(\varepsilon)}$, which is mostly
rooted in the different shape of the support of the functions
$f_{Q,l}^{(\varepsilon)}$, $l \geq 0$ and
$f_{Q,l}^{(\varepsilon)}$, $l \leq 0$, respectively (compare the
support inclusions in~\eqref{eqn:mollified_haarfun--1}
and~\eqref{eqn:mollified_haarfun--2}).

\bigskip

As indicated before we will dominate each operator
$P_l^{(\varepsilon)}$ by a series of ring domain operators
\begin{equation*}
  P_l^{(\varepsilon)}
  = \sum_{\lambda(l)} c_{\lambda(l)}\, S_{\lambda(l)}.
\end{equation*}
We shall make use of the estimates for the ring domain operators
$S_\lambda$ developed in section~\ref{s:ring_domain_operator}.

\bigskip

Before analyzing the operators $P_l^{(\varepsilon)}$, we want to
find inequalities for the mollified Haar functions
$f_{Q,l}^{(\varepsilon)}$.
Let $D^{(\varepsilon)}(Q)$ denote the set of discontinuities of the
Haar function $h_Q^{(\varepsilon)}$, then
\begin{equation*}
  D_l^{(\varepsilon)}(Q)
  = \{ x \in \bb R^n\, :\,
    \dist(x, D^{(\varepsilon)}(Q)) \leq C\, 2^{-l}\, \diam(Q) \}.
\end{equation*}
If $l \geq 0$, then
\begin{equation}
  \begin{aligned}
    \gint{x}{f_{Q,l}^{(\varepsilon)}(x)} & = 0,
    & \supp f_{Q,l}^{(\varepsilon)} & \subset D_l^{(\varepsilon)}(Q),\\
    |f_{Q,l}^{(\varepsilon)}| & \leq C,
    & \lip(f_{Q,l}^{(\varepsilon)}) & \leq C\, 2^l\, (\diam(Q))^{-1},
  \end{aligned}
  \label{eqn:mollified_haarfun--1}
\end{equation}
and if $l \leq 0$, we have
\begin{equation}
  \begin{aligned}
    \gint{x}{f_{Q,l}^{(\varepsilon)}(x)}
      & = 0,
    & \supp f_{Q,l}^{(\varepsilon)}
      & \subset C\, 2^{|l|} Q,\\
    |f_{Q,l}^{(\varepsilon)}|
      & \leq C\, 2^{-|l|(n+1)},
    & \lip(f_{Q,l}^{(\varepsilon)})
      & \leq C\, 2^{-|l|(n+2)}\, (\diam(Q))^{-1}.
  \end{aligned}
  \label{eqn:mollified_haarfun--2}
\end{equation}

The different behaviour of the functions $f_{Q,l}^{(\varepsilon)}$
appearing in the definition of the Operators $P_l^{(\varepsilon)}$ for
different signs of $l$ induces the cases $l \geq 0$ and $l \leq 0$.

\subsubsection{Estimates for  $P_l^{(\varepsilon)}$, $l \geq 0$}\label{sss:decomposition_estimates--l>=0}\hfill

At first the operator $P_l$ will be splitted according to
inequalities~\eqref{eqn:mollified_haarfun_coeff--l>=0_case_1},
\eqref{eqn:mollified_haarfun_coeff--l>=0_case_2} and
\eqref{eqn:mollified_haarfun_coeff--l>=0_case_3} into
\begin{equation*}
  P_l = A_l + B_l + C_l,
\end{equation*}
see~\eqref{eqn:mollified_kernel_splitting--l>=0}.
Then we will show that each of the operators $A_l$, $B_l^*$ and
$C_l^*$ is dominated by certain series of ring domain operators, which
are in turn estimated using the main result on ring domain operators,
theorem~\ref{thm:ring_domain_operator}.
In this manner we gain
inequality~\eqref{eqn:mollified_operator_estimate--l>=0-2}, which
reads
\begin{equation}\label{eqn:mollified_operator_estimate--l>=0-1}
  \|P_l\, :\, L_X^p \rightarrow L_X^p \|
  \leq C\, 2^{-l(1 - \frac{1}{\cal T(L_X^p)})},
\end{equation}
where the constant $C$ depends only on $n$, $p$, $X$ and
$\cal T(L_X^p)$.

\bigskip

Using identity \eqref{eqn:kernel--figiel_expansion_simplified} and
dropping the superscripts, we rewrite
equality~\eqref{eqn:mollified_operator}
\begin{align}
  \big( P_l u \big) (x) & = \gint{y}{K_l(x,y)\, u(y)},
  \label{eqn:mollified_operator--kernel_representation}
\intertext{where}
  K_l(x,y)
    & = \sum_{Q,M \in \dcubes}
    \langle f_{Q,l}, h_M \rangle\,
    h_Q(x)\, h_M(y)\, |Q|^{-1} |M|^{-1}.
    \label{eqn:mollified_operator--kernel}
\end{align}

It turns out, that the estimates for the coefficients
$\langle f_{Q,l}, h_M \rangle$ are essentially determined by the
ratio of the diameters of the cubes $Q$ and $M$.
\begin{enumerate}
\item If $\diam(Q) \leq \diam(M)$, using
  $|D_l(Q)| \lesssim 2^{-l}\, |Q|$ and the boundedness of $f_{Q,l}$
  and $h_M$ implies
  \begin{equation}\label{eqn:mollified_haarfun_coeff--l>=0_case_1}
    |\langle f_{Q,l}, h_M \rangle|
    \lesssim 2^{-l}\, |Q|,
  \end{equation}
\item if $2^{-l}\, \diam(Q) \leq \diam(M) < \diam(Q)$, then the
  measure estimate
  \begin{equation*}
    |D_l(Q) \isect M| \lesssim 2^{-l}\, \diam(Q)\,(\diam(M))^{n-1}
  \end{equation*}
  together with inequality~\eqref{eqn:mollified_haarfun--1} yields
  \begin{equation}\label{eqn:mollified_haarfun_coeff--l>=0_case_2}
    |\langle f_{Q,l}, h_M \rangle|
    \lesssim 2^{-l}\, \diam(Q)\,(\diam(M))^{n-1},
  \end{equation}
\item if $\diam(M) < 2^{-l}\, \diam(Q)$, then
  \begin{equation}\label{eqn:mollified_haarfun_coeff--l>=0_case_3}
    |\langle f_{Q,l}, h_M \rangle|
    \lesssim 2^l\, \frac{\diam(M)}{\diam(Q)}\, |M|,
  \end{equation}
  when considering $\lip(f_{Q,l})$ and $\int h_M = 0$ in
  inequality~\eqref{eqn:mollified_haarfun--1}.
\end{enumerate}

Taking a closer look at the case $\diam(Q) \leq \diam(M)$, we observe
that the coefficient $\langle f_{Q,l}, h_M \rangle$ vanishes, if the
support of $f_{Q,l}$ is contained in a set where $h_M$ is constant.
More precisely, let $\{M_i\}_{1 \leq i \leq 2^n}$ be the immediate
dyadic successors of $M$, then if
\begin{equation*}
  \supp f_{Q,l} \subset M_i,
\end{equation*}
for an $1 \leq i \leq 2^n$, we certainly have
\begin{equation*}
  \langle f_{Q,l}, h_M \rangle = 0.
\end{equation*}

\bigskip

Now we focus on estimating the operators $P_l$, with kernel
representation~\eqref{eqn:mollified_operator--kernel}, that was
\begin{equation*}
  K_l(x,y)
    = \sum_{Q,M \in \dcubes}
    \langle f_{Q,l}, h_M \rangle\,
    h_Q(x)\, h_M(y)\, |Q|^{-1} |M|^{-1}.
\end{equation*}
The different behaviour of the
estimates~\eqref{eqn:mollified_haarfun_coeff--l>=0_case_1}~to~\eqref{eqn:mollified_haarfun_coeff--l>=0_case_3}
for the coefficients $f_{Q,l}$ naturally suggests to rearrange the
series in $K_l$ according to the ratio of the diameters of $Q$ and
$M$. 
So we split the set of all pairs of dyadic cubes
$\dcubes\times\dcubes$ in
\begin{align*}
  \cal A_l & = \big\{ (Q,M)\, :\, \diam(Q) \leq \diam(M) \big\},\\
  \cal B_l & = \big\{
    (Q,M)\, :\, 2^{-l}\, \diam(Q) \leq \diam(M) < \diam(Q)
  \big\},\\
  \cal C_l & = \big\{
    (Q,M)\, :\, \diam(M) < 2^{-l}\, \diam(Q)
  \big\},
\end{align*}
and define associated kernels
\begin{equation}
  \begin{aligned}
    A_l(x,y) & = \sum_{(Q,M) \in \cal A_l}
      \langle f_{Q,l}, h_M \rangle\,
      h_Q(x)\, h_M(y)\, |Q|^{-1} |M|^{-1},\\
    B_l(x,y) & = \sum_{(Q,M) \in \cal B_l}
      \langle f_{Q,l}, h_M \rangle\,
      h_Q(x)\, h_M(y)\, |Q|^{-1} |M|^{-1},\\
    C_l(x,y) & = \sum_{(Q,M) \in \cal C_l}
      \langle f_{Q,l}, h_M \rangle\,
      h_Q(x)\, h_M(y)\, |Q|^{-1} |M|^{-1}.
  \end{aligned}
  \label{eqn:mollified_kernel_splitting--l>=0}
\end{equation}

In the following reduction steps for any of the operators $A_l$, $B_l$
and $C_l$ we will decompose each operator or its adjoint into a series
of ring domain operators.

\bigskip

\paragraph{Reduction for $A_l$}\label{p:reduction_A}\hfill

In this case the cube $M$ can be bigger than $Q$. We recall it was
mentioned subordinate to
inequality~\eqref{eqn:mollified_haarfun_coeff--l>=0_case_1}, that the
coefficients $\langle f_{Q,l}, h_M \rangle$ vanish if $h_M$ ist
constant on the support of $f_{Q,l}$.
This setting is illustrated in
figure~\vref{pic:ring_domain--contained_in_cube}.
\begin{figure}[bt]
  \centering
  \includegraphics{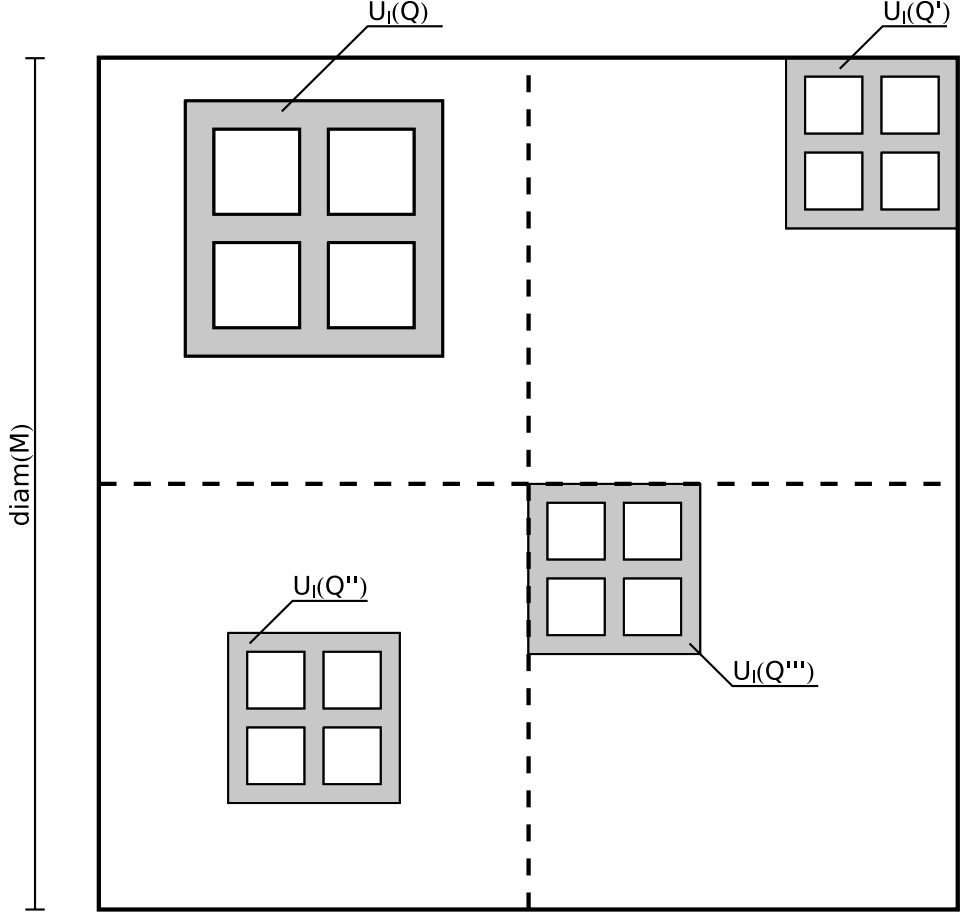}
  \caption{The ring domains (shaded)
    $U_l(Q)$, $U_l(Q')$, $U_l(Q'')$, $U_l(Q''')$ contained in sets of
    constancy of the Haar function $h_M$ (bold continous and dashed
    lines).}
  \label{pic:ring_domain--contained_in_cube}
\end{figure}

\bigskip

At first we parametrize the double series according to $\lambda$,
where $\diam(Q) = 2^{-\lambda}\, \diam(M)$,
\begin{align*}
   A_l u
  & = \sum_{\lambda = 0}^\infty
    \sum_{\substack{Q,M \in \dcubes:\\
          \sidelength(Q) = 2^{-\lambda} \sidelength(M)}}
      \langle f_{Q,l}, h_M \rangle\,
      h_Q\, u_M\, |Q|^{-1} |M|^{-1}\\
  & = \sum_{\lambda = 0}^\infty
      A_{l,\lambda}\, u.
\end{align*}

Observe that with the ratio
\begin{equation*}
  \diam(Q) = 2^{-\lambda}\, \diam(M)
\end{equation*}
fixed and recalling definition~\eqref{eqn:dyadic_support} we have
\begin{equation*}
  \{Q\, :\, \langle f_{Q,l}, h_M \rangle \neq 0\}
  \subset \{ Q\, :\, Q \isect D_\lambda(M) \neq \emptyset \}
  = \cal U_\lambda(M).
\end{equation*}
Using this fact one has the identity
\begin{equation*}
  A_{l,\lambda}\, u
  =  \sum_{M \in \dcubes} u_M\, |M|^{-1}
    \sum_{Q \in \cal U_\lambda(M)}
    \langle f_{Q,l}, h_M \rangle\, |Q|^{-1}\, h_Q,
\end{equation*}
hence glancing at \eqref{eqn:mollified_haarfun_coeff--l>=0_case_1},
utilizing the \umd--property and Kahane's contraction principle, we
obtain
\begin{align*}
  \| A_{l,\lambda} u \|_{L_X^p}
  & \lesssim 2^{-l}\, \big\|
      \sum_{M \in \dcubes} u_M\, |M|^{-1}
      \sum_{Q \in \cal U_\lambda(M)} h_Q
    \big\|_{L_X^p}\\
  & = 2^{-l}\, \big\|
    \sum_{M \in \dcubes} u_M\, g_{M,\lambda}\, |M|^{-1}
    \big\|_{L_X^p}.
\end{align*}
Applying the triangle inequality, using the above estimate for
$A_{l,\lambda}$, considering the definition of the ring domain
operator~\eqref{eqn:ring_domain_operator}, and invoking
theorem~\ref{thm:ring_domain_operator} yields
\begin{equation*}
  \| A_l u \|_{L_X^p}
  \leq \sum_{\lambda = 0}^\infty \| A_{l,\lambda} u \|_{L_X^p}
  \lesssim 2^{-l}\, \sum_{\lambda=0}^\infty \|S_\lambda u\|_{L_X^p}
  \lesssim 2^{-l}\, \sum_{\lambda=0}^\infty
    2^{-\lambda/\cal C(L_X^p)}\, \|u\|_{L_X^p}.
\end{equation*}
Evaluating the geometric series we attain the estimate
\begin{equation}\label{eqn:mollified_operator_estimate--l>=0_case_1}
  \| A_l u \|_{L_X^p}
  \leq C\, 2^{-l}\, \|u\|_{L_X^p},
\end{equation}
where the constant $C$ depends on $n$, $p$, $X$ and $\cal C(L_X^p)$.

\bigskip

Being aware that with $\lambda \geq 0$ fixed, the collections
$\cal U_\lambda(M)$ are not disjoint as $M$ ranges over $\dcubes$ but
the overlap is bounded by a constant depending solely on the dimension
$n$ and the constant appearing in the definition of $D_\lambda(Q)$, we
could have partitioned $\dcubes$ in a constant number of sets,
generically denoted by $\scr B \subset \dcubes$, such that the
$\cal U_\lambda(M)$ would not have interfered with each other in the
first place. Then one can repeat the argument above, with $\dcubes$
replaced by one of the collections $\scr B$.

\bigskip

\paragraph{Reduction for $B_l$}\label{p:reduction_B}\hfill

This setting is visualised in
figure~\vref{pic:ring_domain--cubes_in_between}.
\begin{figure}[bt]
  \centering
  \includegraphics{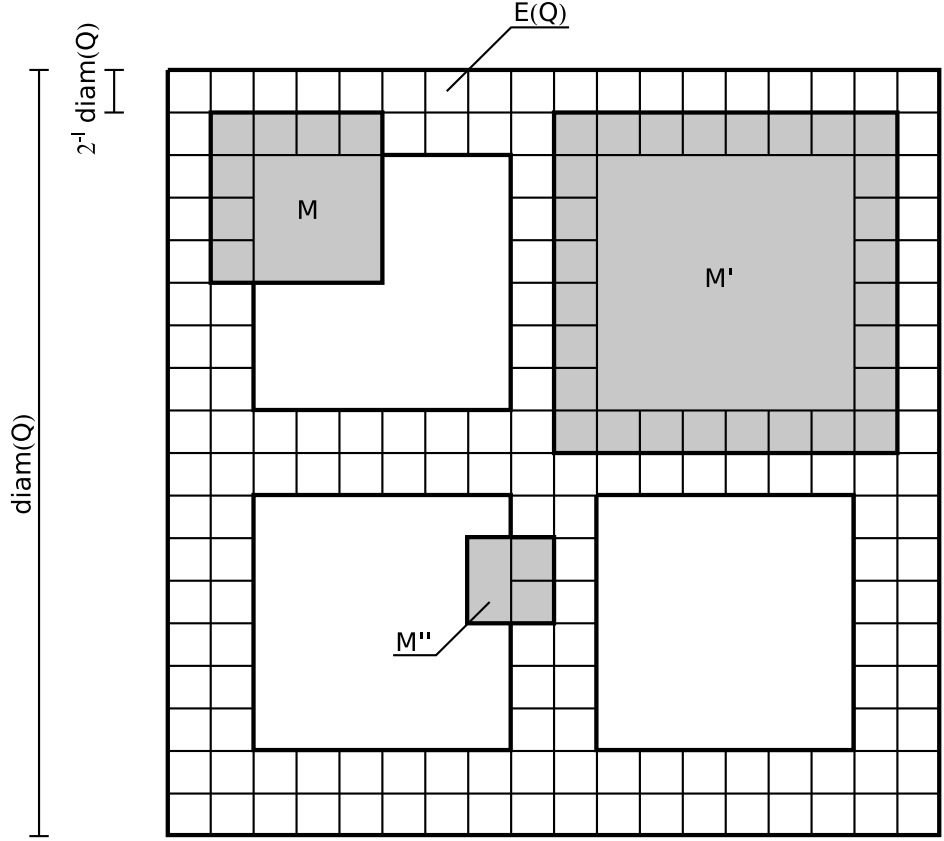}
  \caption{The cubes $M$, $M'$ and $M''$ (shaded) intersecting the
    cubes $E(Q)$ of $\cal U_l(Q)$ (thin lines).}
  \label{pic:ring_domain--cubes_in_between}
\end{figure}
Note that the cubes $M$ are now smaller than $Q$, but bigger than the
building blocks of the ring domain $\cal U_l(Q)$, so
$2^{-l} \diam(Q) \leq \diam(M) < \diam(Q)$.
We may use inequality~\eqref{eqn:mollified_haarfun_coeff--l>=0_case_2}
for estimating $\langle f_{Q,l}, h_M \rangle$.

This time we prefer to analyze $B_l^*$, certainly with respect to the
norm $\|\cdot\|_{L_Y^q}$, where $Y = X^*$ and
$\frac{1}{p} + \frac{1}{q} = 1$. As before we rearrange the series to
see
\begin{align*}
   B_l^* u
   & = \sum_{\lambda = 1}^l
     \sum_{\substack{Q,M \in \dcubes:\\
         \sidelength(M) = 2^{-\lambda} \sidelength(Q)}}
     \langle f_{Q,l}, h_M \rangle\, |M|^{-1}
     h_M\, u_Q\, |Q|^{-1}\\
  & = \sum_{\lambda = 0}^\infty B_{l,\lambda}^*\, u.
\end{align*}

When restricted to the fixed ratio
\begin{equation*}
  \diam(M) = 2^{-\lambda}\, \diam(Q),
\end{equation*}
note that
\begin{equation*}
  \{ M\, :\, \langle f_{Q,l}, h_M \rangle \neq 0 \}
  \subset \{ M\, :\, M \isect D_l(Q) \neq \emptyset \}
  = \cal U_\lambda(Q),
\end{equation*}
so we can rewrite $B_{l,\lambda}^* u$ as follows:
\begin{equation*}
  B_{l,\lambda}^* u
  = \sum_{Q \in \dcubes} u_Q\, |Q|^{-1}
      \sum_{M \in \cal U_\lambda(Q)}
        \langle f_{Q,l}, h_M \rangle\, |M|^{-1} h_M.
\end{equation*}
Taking the norm, utilizing the $\umd$--property and applying Kahane's
contraction principle
to~\eqref{eqn:mollified_haarfun_coeff--l>=0_case_2} yields the
estimate
\begin{align*}
  \| B_{l,\lambda}^* u \|_{L_Y^q}
  & \lesssim 2^{-l}\,
    \big\|
      \sum_{Q \in \dcubes} u_Q\, |Q|^{-1}
      \sum_{M \in \cal U_\lambda(Q)} h_M
    \big\|_{L_Y^q}\\
  & = 2^{-l}\,
    \big\|
      \sum_{Q \in \dcubes} u_Q\, g_{Q,\lambda} |Q|^{-1}
    \big\|_{L_Y^q}.
\end{align*}
In view of theorem~\ref{thm:ring_domain_operator} one proceeds
\begin{equation*}
  \| B_l^* u \|_{L_Y^q}
  \leq \sum_{\lambda = 0}^\infty \| B_{l,\lambda}^*\, u \|_{L_Y^q}
  \lesssim 2^{-l} \sum_{\lambda = 1}^l 2^\lambda\,
    \big\| S_\lambda u \big\|_{L_Y^q}
    \lesssim 2^{-l} \sum_{\lambda = 1}^l
    2^{\lambda(1-1/\cal C(L_Y^q))}\,
    \big\| u \big\|_{L_Y^q},
\end{equation*}
to conclude this case, retaining
\begin{equation}\label{eqn:mollified_operator_estimate--l>=0_case_2}
  \| B_l^* u \|_{L_Y^q}
  \leq C\, 2^{-l/\cal C(L_Y^q)}
    \big\| u \big\|_{L_Y^q},
\end{equation}
where the constant $C$ depends on $n$, $q$ and $Y$.

\bigskip

\paragraph{Reduction for $C_l$}\label{p:reduction_C}\hfill

We may think of the cube $M$ being much smaller than $Q$, even smaller
than the building blocks of the ring domain $\cal U_l(Q)$, and we have
inequality~\eqref{eqn:mollified_haarfun_coeff--l>=0_case_3} at our
disposal. This is visualised in
figure~\vref{pic:ring_domain--cubes_contained_in_covering}.
\begin{figure}[bt]
  \centering
  \includegraphics{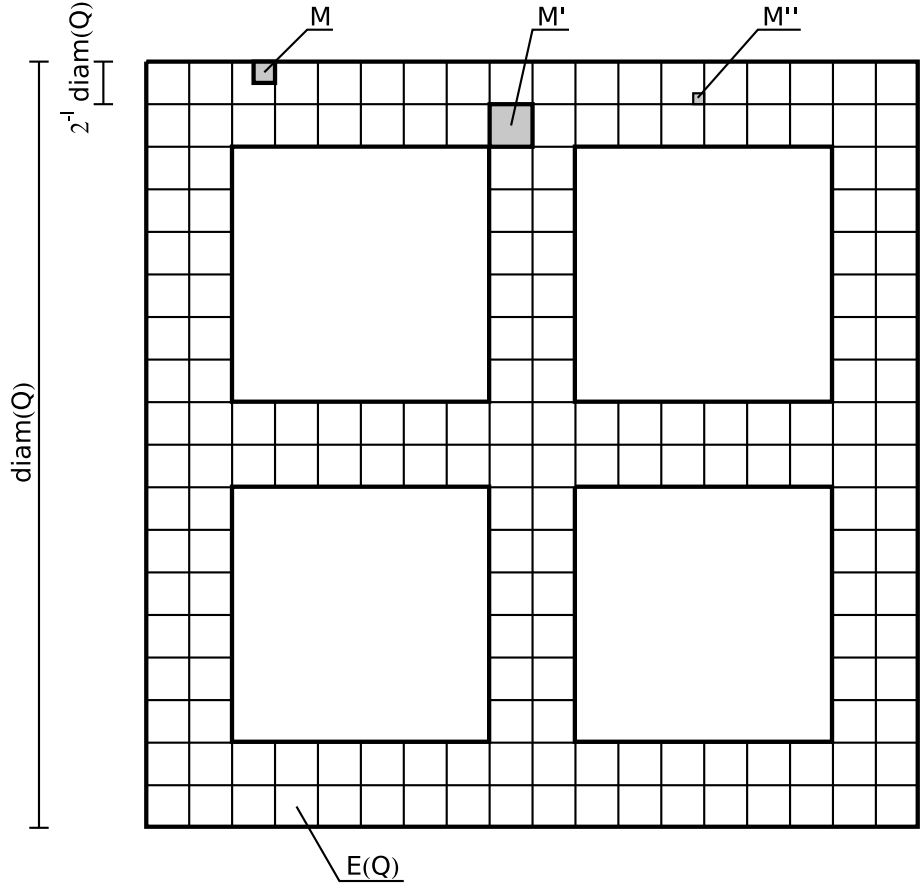}
  \caption{The cubes $M$, $M'$ and $M''$ (shaded) contained in the
    cover $\cal U_l(Q)$ (thin lines).}
  \label{pic:ring_domain--cubes_contained_in_covering}
\end{figure}

As in the preceeding case we aim at estimating the adjoint operator
$C_l^*$; so with $Y=X^*$ and $\frac{1}{p} + \frac{1}{q} = 1$ the
usual parametrization leads to
\begin{align*}
  C_l^* u
  & = \sum_{\lambda = l+1}^\infty
    \sum_{\substack{Q,M \in \dcubes:\\
        \sidelength(M) = 2^{-\lambda} \sidelength(Q)}}
      \langle f_{Q,l}, h_M \rangle\, |M|^{-1}
      h_M\, u_Q\, |Q|^{-1}\\
  & = \sum_{\lambda = l+1}^\infty C_{l,\lambda}^* u.
\end{align*}

Under the restriction of $\diam(M) = 2^{-\lambda}\, \diam(Q)$ holds
that
\begin{equation*}
  \{ M\, :\, \langle f_{Q,l}, h_M \rangle \neq 0 \}
  \subset \{ M\, :\, M \isect D_l(Q) \neq \emptyset \}
  \approx \cal U_l(Q).
\end{equation*}
Note that the last equality is not true algebraically, indicated by
''$\approx$''. This notation is justified by the $\umd$--property and
Kahane's contraction principle, which enables us to exchange zero mean
Haar functions, as long as their supports are preserved.

We proceed by applying essentially the same steps as supplied before,
now having estimate~\eqref{eqn:mollified_haarfun_coeff--l>=0_case_3}
at hand.
Using the $\umd$--property and Kahane's contraction principle
to~\eqref{eqn:mollified_haarfun_coeff--l>=0_case_3} and
\begin{equation*}
\big| \sum_{
  \substack{
    M \in \dcubes\, :\, M \isect D_l(Q) \neq \emptyset\\
    \sidelength(M) = 2^{-\lambda} \sidelength(Q)
  }
} h_M \big|
\leq \big| \sum_{M \in \cal U_l(Q)} h_M \big| = |g_{Q,l}|,
\end{equation*}
we gain
\begin{equation*}
  \| C_{l,\lambda}^* u \|_{L_Y^q}
  \lesssim 2^l\, 2^{-\lambda}
    \big\|
      \sum_{Q \in \dcubes} u_Q\, g_{Q,l}\, |Q|^{-1}
    \big\|_{L_Y^q},
\end{equation*}
thus, the triangle inequality and the above estimate for
$C_{l,\lambda}^*$ yield
\begin{equation*}
  \| C_l^* u \|_{L_Y^q}
  \lesssim \big\| S_l u \big\|_{L_Y^q}.
\end{equation*}
Finally, theorem~\ref{thm:ring_domain_operator} yields
\begin{equation}\label{eqn:mollified_operator_estimate--l>=0_case_3}
  \| C_l^* u \|_{L_Y^q}
  \leq C\, 2^{-l/\cal C(L_Y^q)},
\end{equation}
where the constant $C$ depends only on $n$, $q$ and $Y$.

\bigskip

\paragraph{Summary}\hfill

We combine the
inequalities~\eqref{eqn:mollified_operator_estimate--l>=0_case_1},
\eqref{eqn:mollified_operator_estimate--l>=0_case_2},
\eqref{eqn:mollified_operator_estimate--l>=0_case_3}, exploit
that for $Y = X^*$ and $\frac{1}{p} + \frac{1}{q} = 1$ holds
\begin{equation*}
  (L_X^p)^* = L_Y^q
  \qquad \text{and} \qquad
  \frac{1}{\cal T(L_X^p)} + \frac{1}{\cal (L_Y^q)} = 1,
\end{equation*}
to obtain
\begin{equation}\label{eqn:mollified_operator_estimate--l>=0-2}
  \|P_l\, :\, L_X^p \rightarrow L_X^p \|
  \leq C\, 2^{-l(1 - \frac{1}{\cal T(L_X^p)})},
\end{equation}
where $L_X^p$ has type $\cal T(L_X^p)$ and the constant
$C$ depends only on $n$, $p$, $X$, particularly on $\cal T(L_X^p)$ and
$\cal C(L_X^p)$.

\bigskip

Very seperate reasons, that was the special shape of the support of
$f_{Q,l}$ on the one hand, and the constancy of the Haar function
$h_M$ exploiting the zero mean of $f_{Q,l}$ on the other hand, enabled
us to reduce the estimates for $P_l$ to ring domain operators
\begin{equation*}
  S_\lambda u = \sum_{Q \in \dcubes} u_Q\, g_{Q,\lambda}\, |Q|^{-1},
\end{equation*}
where
\begin{equation*}
  g_{Q,\lambda} = \sum_{E \in \cal U_\lambda(Q)} h_E,
  \qquad \text{and} \qquad
  u = \sum_{Q \in \dcubes} u_Q\, h_Q\, |Q|^{-1}.
\end{equation*}

\bigskip

\subsubsection{Estimates for  $P_l^{(\varepsilon)}$, $l < 0$}\label{sss:decomposition_estimates--l<0}\hfill

We want to find estimates for the remaining sum
\begin{equation*}
  P_- = \sum_{l < 0} P_l.
\end{equation*}
The argument is analogously to the case $l \geq 0$ completed
in~\ref{sss:decomposition_estimates--l>=0}.
The splitting of $\dcubes \times \dcubes$ will be according to the
behaviour of $\langle f_Q, h_M \rangle$ in
\eqref{eqn:mollified_haarfun_coeff--l=0_case_1}~and~\eqref{eqn:mollified_haarfun_coeff--l=0_case_2},
inducing the decomposition of $P_-$.
The functions $f_Q$ are defined beneath.

\bigskip

Dropping all superscripts we issue
representation~\eqref{eqn:mollified_kernel}~\vpageref{eqn:mollified_kernel}
for the kernel of $P_l$
\begin{equation*}
  K_l(x,y)
  = \sum_{Q \in \dcubes} h_Q(x)\, f_{Q,l}(y)\, |Q|^{-1},
\end{equation*}
and recall that for $Q \in \dcubes_j$
\begin{equation*}
  f_{Q,l}
  = \Delta_{j+l} h_Q
  = h_Q * d_{j+l}
  = h_Q * (b_{j+l+1} - b_{j+l}).
\end{equation*}
Taking the sum over $l < 0$ yields
\begin{equation*}
  \sum_{l < 0} f_{Q,l} = h_Q * d_j
\end{equation*}
since
$\lim_{l \rightarrow \infty} b_{j+l} \xrightarrow{\text{p.w.}} 0$,
and so we define the mollified Haar functions
\begin{equation*}
  f_Q = h_Q * d_j, \qquad \text{for all $Q \in \dcubes_j$}.
\end{equation*}
For the properties of the mollifier $d_j$ one might want to take a
look at~\eqref{eqn:decomposition:vanishing_moments}
and~\eqref{eqn:decomposition:convolution}.

In this way we obtain the kernel $K_-(x,y)$ of the operator $P_-$
\begin{equation*}
  K_-(x,y)
  = \sum_{l < 0} K_l(x,y)
  = \sum_{Q \in \dcubes} h_Q(x)\, f_Q(y)\, |Q|^{-1}.
\end{equation*}
In
\eqref{eqn:mollified_haarfun--1}~\vpageref{eqn:mollified_haarfun--1}
we observed that for $l = 0$ there exists a $C > 0$ so that for
$Q \in \dcubes$
\begin{equation}
  \begin{aligned}
    \gint{x}{f_Q(x)} & = 0,
    & \supp f_Q & \subset C \cdot Q,\\
    |f_Q| & \leq C,
    & \lip(f_Q) & \leq C\, (\diam(Q))^{-1}.
  \end{aligned}
  \label{eqn:mollified_haarfun--l=0}
\end{equation}

Figiel's expansion, the $\umd$--property and Kahane's contraction
principle yields the following generic form of the kernel
\begin{equation}\label{eqn:sir_kernel--figiel_expansion--l=0}
  K_-(x,y) = 
  \sum_{M,Q \in \dcubes}
  \langle f_Q, h_M \rangle\,
  |M|^{-1} |Q|^{-1} h_Q(x)\, h_M(y).
\end{equation}

Again, we need to analyze the properties of the coefficients
$\langle f_Q, h_M \rangle$. As in the preceeding cases, the estimates
strongly depend on the ratio of the diameters of $Q$ and $M$.
\begin{enumerate}
\item If $\diam(M) \leq \diam(Q)$, we make use of
  \begin{equation*}
    \lip(f_Q)
      \leq C\, (\diam(Q))^{-1},
  \end{equation*}
  according to
  \eqref{eqn:mollified_haarfun--l=0}
  and discover
  \begin{equation}\label{eqn:mollified_haarfun_coeff--l=0_case_1}
    |\langle f_Q, h_M \rangle|
    \lesssim (\diam(Q))^{-1}\, (\diam(M))^{n+1},
  \end{equation}
\item while if $\diam(M) > \diam(Q)$, one can exploit
  \begin{equation*}
    |f_Q| \leq C
    \qquad \text{and} \qquad
    \supp f_Q \subset C \cdot Q
  \end{equation*}
  to obtain
  \begin{equation}\label{eqn:mollified_haarfun_coeff--l=0_case_2}
    |\langle f_Q, h_M \rangle| \lesssim |Q|.
  \end{equation}
\end{enumerate}

The coefficient $\langle f_Q, h_M \rangle$ vanishes if the
support of $f_Q$ is contained in a set where $h_M$ is constant.
Precisely, let $\{M_i\}_{1 \leq i \leq 2^n}$ be the immediate dyadic
successors of $M$, then if
\begin{equation*}
  \supp f_Q \subset M_i,
\end{equation*}
for an $1 \leq i \leq 2^n$, we have
\begin{equation*}
  \langle f_Q, h_M \rangle = 0.
\end{equation*}

Hence, for $\diam(M) > \diam(Q)$ the cubes $Q$ for which
$\langle f_Q, h_M \rangle \neq 0$ cluster in the vicinity of $D(M)$,
the set of $h_M$'s discontinuities.

\bigskip

We start the analysis of the Operators $P_-$ using the
representation~\eqref{eqn:sir_kernel--figiel_expansion--l=0}, that is
\begin{align*}
  \big( P_- u \big) (x) & = \gint{y}{K_-(x,y)\, u(y)},
\intertext{and}
  K_-(x,y)
    & = \sum_{Q,M \in \dcubes}
    \langle f_Q, h_M \rangle\,
    h_Q(x)\, h_M(y)\, |Q|^{-1} |M|^{-1}.
\end{align*}
Driven by \eqref{eqn:mollified_haarfun_coeff--l=0_case_1} and
\eqref{eqn:mollified_haarfun_coeff--l=0_case_2}, we split the set of
all pairs of dyadic cubes $\dcubes\times\dcubes$ in
\begin{align*}
  \cal A_- & = \big\{ (Q,M)\, :\, \diam(M) \leq \diam(Q) \big\},\\
  \cal B_- & = \big\{ (Q,M)\, :\, \diam(M) > \diam(Q) \big\},
\end{align*}
and define the associated kernels
\begin{align*}
  A_-(x,y) & = \sum_{(Q,M) \in \cal A_-}
    \langle f_Q, h_M \rangle\,
    h_Q(x)\, h_M(y)\, |Q|^{-1} |M|^{-1},\\
  B_-(x,y) & = \sum_{(Q,M) \in \cal B_-}
    \langle f_Q, h_M \rangle\,
    h_Q(x)\, h_M(y)\, |Q|^{-1} |M|^{-1},
\end{align*}
accordingly.

\bigskip

\paragraph{Estimates for $A_-$}\hfill

In this case the size of cube $M$ cannot exceed that of $Q$, so we
may use inequality~\eqref{eqn:mollified_haarfun_coeff--l=0_case_1}.
We rather want to estimate $A_-^*$ than $A_-$ itself, therefore
$Y = X^*$ and $\frac{1}{p} + \frac{1}{q} = 1$.
Rearranging the series in $A_-^*$ according to the ratio of the
diameters of $Q$ and $M$ yields
\begin{align*}
  A_-^* u
  & = \sum_{\lambda = 0}^\infty
    \sum_{\substack{Q,M \in \dcubes:\\
        \sidelength(M) = 2^{-\lambda} \sidelength(Q)}}
    \langle f_Q, h_M \rangle\, u_Q\, |Q|^{-1} h_M\, |M|^{-1}\\
  & = \sum_{\lambda = 0}^\infty A_{-,\lambda}^* u.
\end{align*}
Taking the norm and applying the triangle inequality to the first sum
\begin{equation*}
  \| A_-^* u \|_{L_Y^q}
  \leq \sum_{\lambda = 0}^\infty
    \big\| A_{-,\lambda}^* u \|_{L_Y^q},
\end{equation*}
necessitates to estimate $A_{-,\lambda}^*$.
Utilizing the \umd--property and Kahane's contraction
principle~\eqref{eqn:kahanes_contraction_principle} applied
to~\eqref{eqn:mollified_haarfun_coeff--l=0_case_1}, we infer
\begin{equation*}
  \big\| A_{-,\lambda}^* u \big\|_{L_Y^q}
  \lesssim 2^{-\lambda}
    \big\| \sum_{Q \in \dcubes}
      \sum_{\substack{
          \sidelength(M) = 2^{-\lambda} \sidelength(Q)\\
          M \isect (C \cdot Q) \neq \emptyset
        }
      }
      u_Q\, |Q|^{-1} h_M
    \big\|_{L_Y^q}.
\end{equation*}
For every $Q \in \dcubes$ we issue Kahane's contraction principle on
\begin{equation*}
  \big| \sum_{
    \substack{
      \sidelength(M) = 2^{-\lambda} \sidelength(Q)\\
      M \isect (C \cdot Q) \neq \emptyset
    }
  }
  h_M
  \big|
  \leq |h_Q|,
\end{equation*}
keeping in mind that we would actually need a constant number of
Figiel shifts of $h_Q$ to cover the whole support of this sum.
Nevertheless, the bound~\eqref{eqn:shift_operator_1_estimate} allows
us to estimate
\begin{align*}
  \big\| A_{-,\lambda}^* u \big\|_{L_Y^q} 
  & \lesssim 2^{-\lambda}\, \big\|
    \sum_{Q \in \dcubes} u_Q\, |Q|^{-1}
    \sum_{\substack{
        \sidelength(M) = 2^{-\lambda} \sidelength(Q)\\
        M \isect (C \cdot Q) \neq \emptyset
      }
    }
    h_M
  \big\|_{L_Y^q}\\
  & \lesssim 2^{-\lambda}\, \big\| u \big\|_{L_Y^q}.
\end{align*}
To conclude, we string together our estimates, yielding
\begin{equation}\label{eqn:mollified_operator_estimate--l=0_case_1}
  \| A_-^* u \|_{L_X^p}
  \leq C\, \|u\|_{L_X^p},
\end{equation}
where the constant $C$ depends on $n$, $q$ and $Y$.

\bigskip

\paragraph{Estimates for $B_-$}\hfill

In this setting the size of $M$ does exceed $Q$.

With the usual parametrization of $B_-$ we have
\begin{align*}
  B_- u
  & = \sum_{\lambda = 1}^\infty
    \sum_{\substack{Q,M \in \dcubes:\\\diam(Q) = 2^{-\lambda} \diam(M)}}
    \langle f_Q, h_M \rangle\,h_Q\, |Q|^{-1} u_M\, |M|^{-1}\\
  & = \sum_{\lambda = 1}^\infty B_{-,\lambda} u.
\end{align*}
Restricted to cubes $Q,M \in \dcubes$ with
\begin{equation*}
  2^{-\lambda} \diam(M) = \diam(Q)
\end{equation*}
one can see that for all $M$ the following holds true:
\begin{equation*}
  \{Q\, :\, \langle f_Q, h_M \rangle \neq 0 \}
  \subset \{Q\, :\, (C \cdot Q) \isect D(Q) \neq \emptyset\}
  \subset \cal U_\lambda(M).
\end{equation*}
Successively using the \umd--property, Kahane's contraction principle
applied to~\eqref{eqn:mollified_haarfun_coeff--l=0_case_2} and the
inclusion above we obtain
\begin{align*}
  \big\| B_{-,\lambda} u \big\|_{L_X^p}
  & \lesssim \big\|
  \sum_{M \in \dcubes} u_M\, |M|^{-1}\,
    \sum_{Q \in \cal U_\lambda(M)} h_Q
  \big\|_{L_X^p}\\
  & = \big\|
  \sum_{M \in \dcubes} u_M\, g_{M,\lambda}\, |M|^{-1}
  \big\|_{L_X^p}.
\end{align*}
The main result on ring domain operators
theorem~\ref{thm:ring_domain_operator} yields
\begin{equation*}
  \| B_{-,\lambda} u \|_{L_X^p}
  \lesssim \| S_\lambda u \|_{L_X^p}
  \lesssim 2^{-\lambda/\cal C(L_X^p)}\, \|u\|_{L_X^p},
\end{equation*}
hence merging our inequalities we attain
\begin{equation}\label{eqn:mollified_operator_estimate--l=0_case_2}
  \|B_- u\|_{L_X^p} \leq C\, \|u\|_{L_X^p},
\end{equation}
where the constant $C$ depends on $n$, $p$, $X$ and $\cal C(L_X^p)$.

\bigskip

\paragraph{Summary}\hfill

Inequality~\eqref{eqn:mollified_operator_estimate--l=0_case_1}
and~\eqref{eqn:mollified_operator_estimate--l=0_case_2} together imply
the boundedness for the mollified operator $P_-$
\begin{equation}\label{eqn:mollified_operator_estimate--l=0}
  \|P_-\, :\, L_X^p \rightarrow L_X^p \| \leq C,
\end{equation}
where the constant $C$ depends on $n$, $p$, $X$, particularly on
$\cal T(L_X^p)$ and $\cal C(L_X^p)$.

In the case $l \leq 0$, the shape of the support of the mollified Haar
function $f_{Q,l}$ is not a ring domain, opposed to the case $l \geq
0$. So we cannot expect to reduce our estimates to ring domain
operators in cases where the shape of the support of $f_{Q,l}$ is
crucial. Revisiting the reduction to ring domain operators for
$l \geq 0$, it is clear that the reduction to ring domain operators is
feasible for the operator $B_-$, since we can still exploit the zero
mean of $f_{Q,l}$ on sets where $h_M$ is constant.

\subsection{Estimates for $P_l^{(\varepsilon)} R_{i_0}^{-1}$}\label{ss:decomposition_riesz_estimates}\hfill

In this brief section we will establish estimates for
$P_l^{(\varepsilon)} R_{i_0}^{-1}$, $l \in \bb Z$ by reducing them to
estimates for $P_l^{(\varepsilon)}$. This necessitates that
$\big( R_{i_0}^{-1} \big)^*$ maps the mollified Haar functions
$f_{Q,l}^{(\varepsilon)}$ to functions $k_{Q,l}^{(\varepsilon)}$
enjoying similar properties.
Due to the algebraic identity~\eqref{eqn:riesz_transform_inverse},
this amounts to controlling the support of the $k_{Q,l}$ (besides
factors depending on $l$).
Assuming $\varepsilon_{i_0} = 1$, one can exploit
\begin{equation*}
  \supp \big( \cond_{i_0} h_Q^{(\varepsilon)} \big)
  \subset Q,
\end{equation*}
provoking the functions $k_{Q,l,i}$ defined
in~\eqref{eqn:mollified_haarfun_riesz} to exhibit the support
conditions asserted in~\eqref{eqn:mollified_haarfun_riesz--1}
and~\eqref{eqn:mollified_haarfun_riesz--2}.

\bigskip

It is a well known fact that one can write the inverse of the Riesz
transform as
\begin{equation}\label{eqn:riesz_transform_inverse}
  R_{i_0}^{-1}
  = R_{i_0} + \sum_{\substack{1 \leq i \leq n\\i \neq i_0}}
    \bb E_{i_0} \partial_i R_i,
\end{equation}
where $\bb E_{i_0}$ denotes integration with respect to the $i_0$--th
variable,
\begin{equation*}
  \bb E_{i_0}f (x)
  = \int_{-\infty}^{x_{i_0}}
    f(x_1,\ldots,x_{i_0-1},s,x_{i_0+1},\ldots,x_n)\, \mathrm ds,
    \qquad x = (x_1,\ldots,x_n).
\end{equation*}

Now we introduce the family of functions
\begin{equation}\label{eqn:mollified_haarfun_riesz}
  k_{Q,l,i}^{(\varepsilon)}
  = \Delta_{j+l} \big( \bb E_{i_0} \partial_i h_Q^{(\varepsilon)} \big),
  \qquad \text{if $Q \in \dcubes_j$},
\end{equation}
and consider
\begin{align*}
  P_l^{(\varepsilon)} R_{i_0}^{-1} u
  & = \sum_{j \in \bb Z} \sum_{Q \in \dcubes_j}
    \big\langle
      R_{i_0} u, \Delta_{j+l}(h_Q^{(\varepsilon)})
    \big\rangle\,
    h_Q^{(\varepsilon)}\, |Q|^{-1}\\
  &\quad + \sum_{\substack{1 \leq i \leq n\\i \neq i_0}}
  \sum_{j \in \bb Z} \sum_{Q \in \dcubes_j}
    \big\langle
      \bb E_{i_0} \partial_i R_i u, \Delta_{j+l}(h_Q^{(\varepsilon)})
    \big\rangle\,
    h_Q^{(\varepsilon)}\, |Q|^{-1}.
\end{align*}
Since the Riesz transforms are continous mappings, it is obvious that
the first sum can be treated as in
section~\ref{ss:decomposition_estimates}.
For the seond sum, we fix a coordinate $i \neq i_0$, rearrange the
operators in the scalar product and use the functions defined in
\eqref{eqn:mollified_haarfun_riesz}, hence
\begin{equation*}
  \sum_{j \in \bb Z} \sum_{Q \in \dcubes_j}
    \langle
      \bb E_{i_0} \partial_i R_i u, \Delta_{j+l}(h_Q^{(\varepsilon)})
    \rangle\,
    h_Q^{(\varepsilon)}\, |Q|^{-1}
  = \sum_{Q \in \dcubes}
    \langle R_i u, k_{Q,l,i}^{(\varepsilon)} \rangle\,
    h_Q^{(\varepsilon)}\, |Q|^{-1}.
\end{equation*}
The continuity of the Riesz transforms $R_i:L_X^p \rightarrow L_X^p$
allows us to estimate the following simpler type of operator
\begin{equation*}
  K_{l,i}^{(\varepsilon)} u
  = \sum_{Q \in \dcubes}
    \langle u, k_{Q,l,i}^{(\varepsilon)} \rangle\,
    h_Q^{(\varepsilon)}\, |Q|^{-1}.
\end{equation*}

In order to estimate $K_{l,i}^{(\varepsilon)}$ we need to analyze the
analytic properties of the functions $k_{Q,l,i}^{(\varepsilon)}$.
If $l \geq 0$, then
\begin{equation}
  \begin{aligned}
    \gint{x}{k_{Q,l,i}^{(\varepsilon)}(x)} & = 0,
    & \supp k_{Q,l,i}^{(\varepsilon)} & \subset D_l^{(\varepsilon)}(Q),\\
    |k_{Q,l,i}^{(\varepsilon)}| & \leq C\, 2^l,
    & \lip(k_{Q,l,i}^{(\varepsilon)}) & \leq C\, 2^{2l}\, (\diam(Q))^{-1},
  \end{aligned}
  \label{eqn:mollified_haarfun_riesz--1}
\end{equation}
and for $l \leq 0$
\begin{equation}
  \begin{aligned}
    \gint{x}{k_{Q,l,i}^{(\varepsilon)}(x)}
      & = 0,
    & \supp k_{Q,l,i}^{(\varepsilon)}
      & \subset C\, 2^{|l|} Q,\\
    |k_{Q,l,i}^{(\varepsilon)}|
      & \leq C\, 2^{-|l|(n+1)},
    & \lip(k_{Q,l,i}^{(\varepsilon)})
      & \leq C\, 2^{-|l|(n+2)}\, (\diam(Q))^{-1}.
  \end{aligned}
  \label{eqn:mollified_haarfun_riesz--2}
\end{equation}
Note that the above properties of $k_{Q,l,i}^{(\varepsilon)}$
especially depend on the coordinatewise vanishing moments of
$b$~\eqref{eqn:decomposition:vanishing_moments}, introduced by
$\Delta_l$ in equations~\eqref{eqn:decomposition:convolution}
and~\eqref{eqn:dfn:mollified_operator--1}.
Furthermore observe the definition of $k_{Q,l,i}^{(\varepsilon)}$
involves an integration of $h_Q^{(\varepsilon)}$ with respect to the
variable $x_{i_0}$. Now if $\varepsilon_{i_0} = 1$, then
$\cond_{i_0 } h_Q^{(\varepsilon)}$ is compactly supported in $Q$, but
if $\varepsilon_{i_0} = 0$, then
$\supp\big( \cond_{i_0 } h_Q^{(\varepsilon)} \big)$ is unbounded.
This urges the dominating Riesz transform $R_{i_0}$ to act on a
coordinate $x_{i_0}$ for which $P^{(\varepsilon)}$ projects onto zero
mean Haar functions, thus necessitating $\varepsilon_{i_0} = 1$.

If we compare this with the
properties~\eqref{eqn:mollified_haarfun--1} and
\eqref{eqn:mollified_haarfun--2} regarding the functions
$f_{Q,l}^{(\varepsilon)}$, it turns out that the properties coincide
if $l \leq 0$, and that $2^{-l}\, k_{Q,l,i}^{(\varepsilon)}$,
satisfies the same conditions as $f_{Q,l}^{(\varepsilon)}$, if
$l \geq 0$.
Bootstrapping the proofs in section~\ref{ss:decomposition_estimates},
we note that those arguments where solely depending on the analytic
properties~\eqref{eqn:mollified_haarfun--1} and
\eqref{eqn:mollified_haarfun--2} of the functions
$f_{Q,l}^{(\varepsilon)}$.
With regard to~\eqref{eqn:mollified_haarfun_riesz--1}
respectively~\eqref{eqn:mollified_haarfun_riesz--1}, the same proofs
are practicable with the functions $k_{Q,l,i}^{(\varepsilon)}$, if
$l \leq 0$, respectively $2^{-l}\, k_{Q,l,i}^{(\varepsilon)}$, if
$l \geq 0$, replacing $f_{Q,l}$.
Stringing this all together implies the following upper bounds for the
operators $K_{l,i}^{(\varepsilon)}$ and $K_{-,i}^{(\varepsilon)}$,
where
\begin{equation*}
  K_{-,i}^{(\varepsilon)} = \sum_{l \leq 0} K_{l,i}^{(\varepsilon)}.
\end{equation*}

Estimate~\eqref{eqn:mollified_operator_estimate--l=0} implies
\begin{equation*}
  \|K_{-,i}^{(\varepsilon)}\, :\, L_X^p \rightarrow L_X^p \| \leq C,
\end{equation*}
and if $l \geq 0$, then using
estimate~\eqref{eqn:mollified_operator_estimate--l>=0-1} on
$2^{-l}\, K_{l,i}^{(\varepsilon)}$ yields
\begin{equation*}
  \|K_{l,i}^{(\varepsilon)}\, :\, L_X^p \rightarrow L_X^p \|
  \leq C\, 2^{l/\cal T(L_X^p)},
\end{equation*}
where the constant $C$ depends on $n$, $p$, $X$, particularly on
$\cal T(L_X^p)$ and $\cal C(L_X^p)$.

Obviously, the estimate for the operators
$P_l^{(\varepsilon)} R_{i_0}^{-1}$ ist just a constant multiple of the
operator norm of $K_{l,i}^{(\varepsilon)}$, so we summarize:

If $\varepsilon_{i_0} = 1$, then the following inequalities hold true:
\begin{align}
  \|P_-^{(\varepsilon)} R_{i_0}^{-1}\, :\, L_X^p \rightarrow L_X^p \|
  \leq C,
  \label{eqn:mollified_operator_riesz_estimate--l<=0}
\intertext{and for all $l \geq 0$}
  \|P_l^{(\varepsilon)} R_{i_0}^{-1}\, :\, L_X^p \rightarrow L_X^p \|
  \leq C\, 2^{l/\cal T(L_X^p)},
\label{eqn:mollified_operator_riesz_estimate--l>=0}
\end{align}
where the constant $C$ depends merely on $n$, $p$, $X$,
$\cal T(L_X^p)$ and $\cal C(L_X^p)$.

\newpage
\bibliographystyle{alpha}
\bibliography{bibliography}

\begin{thebibliography}{LMM07}

\bibitem[BS88]{bennett_sharpley:1988}
C.~Bennett and R.~Sharpley.
\newblock {\em Interpolation of Operators}.
\newblock Academic Press, 1988.

\bibitem[Bur81]{burkholder:1981}
D.~L. Burkholder.
\newblock A {G}eometrical {C}haracterization of {B}anach {S}paces in which
  {M}artingale {D}ifference {S}equences are {U}nconditional.
\newblock {\em Annals of Probability}, 9(6):997--1011, 1981.

\bibitem[Bur86]{burkholder:1986}
D.~L. Burkholder.
\newblock Martingales and {F}ourier {A}nalysis in {B}anach {S}paces.
\newblock In {\em Lecture Notes in Mathematics}, volume 1206, pages 61--108.
  Springer, 1986.

\bibitem[Fig88]{figiel:1988}
T.~Figiel.
\newblock {O}n {E}quivalence of {S}ome {B}ases to the {H}aar {S}ystem in
  {S}paces of {V}ector-valued {F}unctions.
\newblock {\em Bulletin of the Polish Academy of Sciences}, 36(3--4):119--131,
  1988.

\bibitem[Fig91]{figiel:1991}
T.~Figiel.
\newblock Singular {I}ntegral {O}perators: {A} {M}artingale {A}pproach.
\newblock In {\em Geometry of Banach Spaces}, number 158 in London Mathematical
  Society Lecture Note Series, pages 95--110, 1991.

\bibitem[FM99]{fonseca_mueller:1999}
I.~Fonseca and S.~M\"uller.
\newblock {$\cal A$-quasiconvexity, lower semicontinuity, and Young measures.}
\newblock {\em SIAM J. Math. Anal.}, 30(6):1355--1390, 1999.

\bibitem[LMM07]{lee_mueller_mueller:2007}
J.~Lee, P.~F.~X. Mueller, and S.~Mueller.
\newblock {C}ompensated {C}ompactness, {S}eparately {C}onvex {F}unctions and
  {I}nterpolatory {E}stimates {B}etween {R}iesz {T}ransforms and {H}aar
  {P}rojections.
\newblock http://www.mis.mpg.de/preprints/2008/preprint2008\_7.pdf, 2007.

\bibitem[MS86]{milman_schechtman:1986}
V.~D. Milman and G.~Schechtman.
\newblock {\em Asymptotic Theory of Finite Dimensional Normed Spaces}.
\newblock Springer, 1986.

\bibitem[Mue99]{s_mueller:1999}
S.~Mueller.
\newblock Rank-one {C}onvexity {I}mplies {Q}uasiconvexity on {D}iagonal
  {M}atrices.
\newblock {\em International Mathematics Research Notices},
  1999(20):1087--1095, 1999.

\bibitem[Mue05]{mueller:2005}
P.~F.~X. Mueller.
\newblock {\em Isomorphisms Between $H^1$ Spaces}.
\newblock Birkhäuser, 2005.

\bibitem[Mur78]{murat:1978}
F.~Murat.
\newblock {C}ompacite par compensation.
\newblock {\em {A}nn. {S}cuola {N}orm. {S}up. {P}isa, {C}l. {S}ci. (4)},
  5(3):489--507, 1978.

\bibitem[Mur79]{murat:1979}
F.~Murat.
\newblock {C}ompacite par compensation {II}.
\newblock Recent methods in non-linear analysis, Proc. int. Meet., Rome 1978,
  245-256, 1979.

\bibitem[Mur81]{murat:1981}
F.~Murat.
\newblock {Compacite par compensation: condition necessaire et suffisante de
  continuite faible sous une hypoth\`ese de rang constant}.
\newblock {\em {A}nn. {S}cuola {N}orm. {S}up. {P}isa, {C}l. {S}ci. (4)},
  8:69--102, 1981.

\bibitem[Ste70]{stein:1970}
E.~M. Stein.
\newblock {\em Singular Integrals and Differentiability Properties of
  Functions}.
\newblock Princeton University Press, 1970.

\bibitem[Ste93]{stein:1993}
E.~M. Stein.
\newblock {\em Harmonic Analysis: Real--Variable Methods, Orthogonality and
  Oscillatory Integrals}.
\newblock Princeton University Press, 1993.

\bibitem[Tar78]{tartar:1978}
L.~Tartar.
\newblock {Une nouvelle m\'ethode de r\'esolution d'\'equations aux
  d\'eriv\'ees partielles non lin\'eaires}.
\newblock In {\em Lecture Notes in Mathematics}, volume 665, pages 228--241.
  Springer, 1978.

\bibitem[Tar79]{tartar:1979}
L.~Tartar.
\newblock {Compensated compactness and applications to partial differential
  equations.}
\newblock In {\em Res. Notes Math.}, volume~39, pages 136--212. Pitman, 1979.

\bibitem[Tar83]{tartar:1983}
L.~Tartar.
\newblock {The compensated compactness method applied to systems of
  conservation laws.}
\newblock {Systems of nonlinear partial differential equations, Proc. NATO Adv.
  Study Inst., Oxford/U.K. 1982, NATO ASI Ser. Ser., C 111, 263-285}, 1983.

\bibitem[Tar84]{tartar:1984}
L.~Tartar.
\newblock {\'Etude des oscillations dans les \'equations aux d\'eriv\'ees
  partielles non lin\'eaires. (Study of oscillations in nonlinear partial
  differential equations)}.
\newblock {Trends and applications of pure mathematics to mechanics, Symp.,
  Palaiseau/France 1983, Lect. Notes Phys. 195, 384-412}, 1984.

\bibitem[Tar90]{tartar:1990}
L.~Tartar.
\newblock {$H$-measures, a new approach for studying homogenisation,
  oscillations and concentration effects in partial differential equations}.
\newblock {\em Proc. R. Soc. Edinb., Sect. A}, 115(3-4):193--230, 1990.

\bibitem[Tar92]{tartar:1992}
L.~Tartar.
\newblock {On mathematical tools for studying partial differential equations of
  continuum physics: $H$-measures and Young measures.}
\newblock {Buttazzo, Giuseppe (ed.) et al., Developments in partial
  differential equations and applications to mathematical physics. Proceedings
  of an international meeting, Ferrara, Italy, October 14-18, 1991. New York,
  NY: Plenum Press. 201-217}, 1992.

\bibitem[Tar93]{tartar:1993}
L.~Tartar.
\newblock {Some remarks on separately convex functions.}
\newblock {Kinderlehrer, David (ed.) et al., Microstructure and phase
  transition. Based on the proceedings of a workshop which was an integral part
  of the 1990-91 IMA program on phase transitions and free boundaries. New
  York, NY: Springer-Verlag. IMA Vol. Math. Appl. 54, 191-204}, 1993.

\bibitem[Woj91]{wojtaszczyk:1991}
P.~Wojtaszczyk.
\newblock {\em Banach Spaces for Analysts}.
\newblock Cambridge University Press, 1991.

\end{thebibliography}
\nocite{*}
\end{document}